\newtheorem{lema}{Lemma}[section]
\newtheorem{theo}[lema]{Theorem}
\newtheorem{prop}[lema]{Proposition}
\newtheorem{coro}[lema]{Corollary}
\theoremstyle{definition}
\theoremstyle{remark}
\newtheorem{rema}[lema]{Remark}
\newtheorem{hypothesis}[lema]{Hypothesis}
\newtheorem{conj}{Open Question}
\title{Quartic rigid systems in the plane and in the Poincar\'e sphere}
\author{M.J. \'Alvarez}
\address{Departament de Matem{\`a}tiques i Inform{\`a}tica, IAC3 Institute of Applied Computing \& Community Code, Universitat de les Illes Ba\-lears, 07122, Palma de Mallorca, Spain}
\email{chus.alvarez@uib.es}
\author{J.L. Bravo}
\address{Departamento de Matematicas, Universidad de Extremadura, 06071 Badajoz, Spain}
\email{trinidad@unex.es}
\author{L.A. Calder\'on*}
\address{Departament de Ciencies Matematiques i Informatica, IAC3 Institute of Applied Computing \& Community Code, Universitat de les Illes Ballears, 07122 Palma, Spain}
\email{l.calderon@uib.es}
\keywords{Rigid systems; Limit cycle; Planar systems; Poincaré sphere}
\begin{document}

\begin{abstract}
We consider the planar family of rigid systems of the form $x'=-y+xP(x,y), y'=x+yP(x,y)$, where $P$ is any polynomial with monomials of degree one and three.
This is the simplest non-trivial family of rigid systems with no rotatory parameters. 

The family can be compactified to the Poincaré sphere such that the vector field along the equator is not identically null. We study the centers, singular points and limit cycles of that family on the plane and on the sphere. 

\end{abstract}

\maketitle

\section{Introduction}

In this work we are going to study rigid planar polynomial differential systems. These systems are characterized by  having the origin as its unique critical point, being always monodromic. Furthermore, the solutions around the origin have constant angular velocity. These systems can be written, after a linear change of variables and a rescaling if necessary, see \cite{Conti}, as follows:
\begin{equation}\label{eq:rigido}
\left\{
\begin{aligned}
x'=-y+x F(x,y),\\
y'=\phantom{-}x+y F(x,y),
\end{aligned}
\right.
\end{equation}
for some analytic function $F(x,y)$ such that $F(0,0)=0.$ Observe that, if the origin is a center, then it is isochronous, that is, all the solutions take the same time to complete a full revolution around the origin. In this case,  the center is referred to as uniformly isochronous, as the angular velocity remains constant throughout this rotational motion.

There are several factors that make the rigid family interesting. Firstly,  the fact that the origin is its only critical point implies that any potential limit cycles, if they exist, have to be nested around it. Secondly, this family plays an important role in the broader problem of isochronicity, as we will explain later on. 

In this work, we are going to study the center and cyclicity problems for a family of planar rigid systems. However, the main contribution of this work lies in the study of this system in the Poincaré sphere. Seen in this context, the system exhibits some very interesting properties that, to the best of our knowledge, have not been previously investigated. Understanding the  system's dynamics across the whole sphere provides valuable information about the planar system in both its finite and its infinite parts. As an example, in Theorem \ref{teo-po-esfera} we prove that the system always has  a periodic orbit in the sphere. This periodic orbit can not be seen in, nor from, the finite plane, although this solution plays an important role in the organization of the dynamics of the planar system.

After its introduction by Conti in \cite{Conti}, the rigid systems has attracted the attention of numerous researchers, see for instance \cite{ARJMAA03,  Itikawa_tesis, LlibRamRamSad} and the references therein. In \cite{Rudenok} it was proved that any polynomial system with linear part 
 $(-y,x)^t$ has an isochronous center if and only  it can be transformed by means of a specific analytic change of type $$(x \to x + P(y^2), y\to y + Q(x,y))$$ into a system of the form \eqref{eq:rigido}. Consequently, the problem of determining whether a center is isochronous passes through the understanding of rigid systems. 

Observe that, when the function $F(x,y)$ in \eqref{eq:rigido} is a polynomial of degree $n$,  the system  transformed into polar coordinates can be written as the scalar equation
\begin{equation}\label{eq:Abel_general}
    \dot r=\sum_{k=1}^n F_k(\cos\theta,\sin\theta)r^{k+1},
\end{equation}
being $F(x,y)=\sum_{k=1}^n F_k(x,y).$
The previous equation is a generalized Abel equation and the solutions of \eqref{eq:rigido} are in one-to-one correspondence with positive solutions of \eqref{eq:Abel_general}. In the case where $n=1,$ the rigid system has no limit cycles. This fact  can be easily proved as the scalar equation reduces to a Riccati one of separable variables. However, when $n=2$ there are examples with one limit cycle (observe that the constant term in $F(x, y)$ has been omitted). There is a conjecture suggesting that this is the maximum number of limit cycles that the rigid system can have, see \cite{Gasull}.

One of the properties of the rigid systems is that in some cases, such as when the function $F(x,y)$ is an even-degree polynomial or features a constant term, there is a rotatory parameter, see for instance \cite{GasProTor}. When this rotatory parameter exists, the birth, growth and disappearance of a potential limit cycle is, somehow, controlled.

In this work, we are going to study the simplest family of polynomial rigid systems for which none of its parameters is rotatory. Concretely, the family we are going to study is family \eqref{eq:rigido} wherein the function $F(x, y)$ is defined as follows:
\begin{equation}\label{funcionF}
F(x,y)=b_1 x + b_2 y + a_1 x^3 + a_2 x^2 y + a_3 x y^2 + a_4 y^3\end{equation}

The generalized Abel equation that is in correspondence to the  system we are interested in is
\begin{equation}\label{ec_Abel}
r'=B(\theta)r^2+A(\theta)r^4,\end{equation}
where
\[
\begin{split}
B(\theta) &= b_1 \cos\theta+b_2\sin \theta,\\
A(\theta)&=a_1\cos^3\theta+a_2\cos^2\theta\sin\theta+a_3\cos\theta\sin^2\theta+a_4\sin^3\theta.
\end{split}
\]

Without loss of generality, it is possible to consider the parameter $a_4=0.$ This can be achieved by doing a rotation of angle $\phi,$ being $\phi$ a real root of a specific trigonometric polynomial.

The existence of such a root is guaranteed as the polynomial in question is cubic. Hence, from now on we will consider the family with $a_4=0$ and thus, the concrete family we are studying in this work is
\begin{equation}\label{eq:main}
\left\{
\begin{aligned}
x'=-y+x (b_1 x + b_2 y + a_1 x^3 + a_2 x^2 y + a_3 x y^2 ),\\
y'=\phantom{-}x+y(b_1 x + b_2 y + a_1 x^3 + a_2 x^2 y + a_3 x y^2 ),
\end{aligned}
\right.
\end{equation}

The paper is organized as follows:  Section \ref{Sec2} is dedicated to the study of the center and cyclicity problems in the finite plane. In Section \ref{Sec3} we study the system in the Poincar\'e sphere; more specifically, we study the classification of the infinite critical points and the periodic orbits within the sphere. Finally, in Section \ref{Sec4} we make some conclusions and conjectures about system \eqref{eq:main}.

\section{Centers and cyclicity}\label{Sec2}

In the existing literature, rigid systems having a center at the origin are  commonly referred to as uniform isochronous centers, as their angular velocity is constant.
The center conditions for a general rigid system were given in  \cite{ARJMAA03} in terms of the existence of an analytic commutator. Furthermore, in the same paper, the authors proved that the rigid family with $F(x,y)=F_1(x,y)+F_m(x,y),$ where $F_m$ is a homogeneous polynomial of degree $m,$ has a center at the origin if and only if it is reversible. Moreover, in \cite{Itikawa} the 14 different phase portraits of quartic uniform isochronous centers are given.  In the next result we give the explicit conditions in terms of the parameters of the system to have a center at the origin.

\begin{theo}\label{theo:center}
The origin of system \eqref{eq:main} is a center if and only if one of the following conditions is satisfied:
    \begin{enumerate}
\item $b_1=b_2=0$, 
\item 
$
3 a_1 b_2 (b_2^2-b_1^2)+
b_1 (a_2 b_1^2 + 2 a_3 b_1 b_2 - 3 a_2 b_2^2)=0$ and \\ 
$b_2(-3 a_3 b_1^2 + 2 a_2 b_1 b_2 + a_3 b_2^2)=0.
$
\end{enumerate}

\end{theo}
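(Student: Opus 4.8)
The plan is to pass to the Abel equation \eqref{ec_Abel} and characterize the center via the reversibility criterion recalled above (from \cite{ARJMAA03}): since here $F = F_1 + F_3$ with $F_3$ homogeneous of degree three, the origin is a center if and only if the system is reversible, i.e.\ invariant (up to time reversal) under a reflection $\theta \mapsto 2\theta_0 - \theta$ for some $\theta_0$. Case (1), $b_1 = b_2 = 0$, is immediate: then $B \equiv 0$, the Abel equation \eqref{ec_Abel} becomes $r' = A(\theta) r^4$, which is of separable variables, so every orbit near the origin is periodic and the origin is a center. (Equivalently, one exhibits the explicit first integral / commutator.) So the substance is Case (2): to show that when $(b_1,b_2) \neq (0,0)$ the origin is a center precisely when the two displayed polynomial identities in $b_1, b_2, a_1, a_2, a_3$ hold.

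For Case (2) I would set up the reversibility condition concretely. Write $B(\theta) = b_1\cos\theta + b_2\sin\theta = \rho\cos(\theta - \alpha)$ with $\rho = \sqrt{b_1^2 + b_2^2} > 0$, so $B$ already has a natural axis of symmetry at $\theta = \alpha$ and at $\theta = \alpha + \pi$. The equation $r' = B r^2 + A r^4$ is reversible about $\theta = \theta_0$ iff both $B$ and $A$ are either symmetric or antisymmetric about $\theta_0$ in the matching way — more precisely, substituting $\theta \mapsto 2\theta_0 - \theta$, $r \mapsto r$ and reversing time forces $B(2\theta_0 - \theta) = -B(\theta)$ and $A(2\theta_0-\theta) = -A(\theta)$ (the signs are dictated by the $r^2$ term), i.e.\ $B$ and $A$ must both be odd about $\theta_0$. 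Since $B$ is odd about $\theta_0$ forces $\cos(\theta_0 - \alpha) = 0$, the only candidate axes are $\theta_0 = \alpha \pm \pi/2$. So the center condition in Case (2) reduces to: $A(\theta)$ is odd about the line perpendicular to the vector $(b_1,b_2)$. I would then expand this single trigonometric condition — $A(2\theta_0 - \theta) + A(\theta) \equiv 0$ with $\cos\theta_0 = -b_2/\rho$, $\sin\theta_0 = b_1/\rho$ — into its Fourier components in $\theta$; the vanishing of the independent harmonics yields exactly two polynomial equations in $a_1, a_2, a_3, b_1, b_2$, which I expect to match (after clearing the $\rho$ denominators) the two displayed identities. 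One should double-check that the reversibility criterion of \cite{ARJMAA03} is stated for this precise subfamily and, if necessary, supply the Lyapunov/focal-quantity computation for the converse — i.e.\ that if the two identities fail then the first nonzero focal quantity is nonzero, so the origin is a genuine focus.

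A cleaner, self-contained alternative (which I would likely use for the write-up, avoiding reliance on the exact scope of the cited reversibility theorem) is to compute the first few focal quantities (Lyapunov constants) of \eqref{ec_Abel} directly via the return map. Writing $r(\theta; r_0) = r_0 + \sum_{k\ge 2} u_k(\theta) r_0^k$ and integrating \eqref{ec_Abel} term by term, the Poincaré map is $r_0 \mapsto r_0 + \sum_{k\ge 2} u_k(2\pi) r_0^k$, and one computes $u_2(2\pi), u_3(2\pi), \dots$ as iterated integrals of products of $B$ and $A$ over $[0,2\pi]$. Because $B$ has zero mean and $A$ has zero mean (all its monomials are odd in $(\cos,\sin)$), the low-order quantities will be polynomials in the $b_i, a_j$; setting them to zero and computing the ideal they generate should collapse, using that $(b_1,b_2)\neq(0,0)$, to the two stated relations (together with the case $b_1=b_2=0$ appearing as the other component of the variety). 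The main obstacle is the bookkeeping: deciding how many focal quantities are needed to generate the full center ideal and verifying that the resulting variety has exactly the two irreducible components (1) and (2) — this is a Gröbner-basis / primary-decomposition computation, and the genuine mathematical content is showing no further components hide among higher-order quantities. The reversibility argument above is what makes that finiteness transparent: it pins the center set to the union of $\{b_1=b_2=0\}$ and the reversibility locus, so the two computations are consistent and together give the "if and only if."
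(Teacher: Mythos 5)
Your proposal is correct and essentially mirrors the paper's proof: the authors obtain necessity by computing the Lyapunov constants $l_2,l_3$ (your ``self-contained alternative''), and prove sufficiency exactly as you propose, via integrability when $b_1=b_2=0$ and via reversibility with respect to the line $b_1x+b_2y=0$ (your axis $\theta_0=\alpha\pm\pi/2$) in case (2). The only loose end you leave --- verifying that the Fourier/reversibility conditions on $A$ reduce to the two displayed polynomial identities --- is a finite computation that the paper sidesteps by reading the conditions directly off $l_2=l_3=0$.
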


\begin{proof}
We compute the first Lyapunov constants of the origin of system \eqref{eq:main}, getting:
\begin{eqnarray*}
&&l_2=\frac{\pi}{2} (a_2 b_1 - 3 a_1 b_2 - a_3 b_2),\\
&&l_3=-\frac{\pi}{2} (-a_2 b_1^3  + 3 a_1 b_1^2 b_2 + 3 a_3 b_1^2 b_2 - 9 a_2 b_1 b_2^2 + 23 a_1 b_2^3 + 7 a_3 b_2^3). 
    \end{eqnarray*}
If we set them to zero, we get the two conditions in the statement of the
theorem. It remains to prove that in both cases these conditions imply that the origin
is a center.
\begin{enumerate}
\item In this case the function $F(x,y)$ is homogeneous of degree 3. Note that in this case, the system is integrable, and one first integral is
\[
H(x,y) =
\frac{-1 + a_2x^3 - 3 a_1 x^2 y  - (2 a_1 + a_3) y^3}{3\sqrt{(x^2 + y^2)^3}}.
\]

\item In this last case the system is reversible with respect to the straight line $b_1x+b_2y=0.$ 
\end{enumerate}
Hence, the result is proved.
\end{proof}
\begin{rema}
    Observe that the trivial cases $b_1=a_1=a_3=0$ and $b_2=a_2=0$ are included in the second family of the previous result. These two subfamilies are reversible with respect to the straight lines $y=0$ and $x=0,$ respectively.
\end{rema}

The center conditions are closely related  to the order of weakness of the focus and, hence, with the cyclicity of the critical point. In  \cite{ARJMAA03}, it is proved that the maximum order of a  fine focus of the rigid system with $F(x,y)=F_1(x,y)+F_m(x,y)$ is $\lfloor m/2\rfloor+1,$ that in our case is 2.  In the following result we prove that this fact leads to the existence of at least one limit cycle inside the family \eqref{eq:main}.

\begin{prop}
    There are systems inside the family \eqref{eq:main} having at least one limit cycle.
\end{prop}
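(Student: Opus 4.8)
The plan is to exhibit one system of the family \eqref{eq:main} whose origin is a weak focus of order exactly $2$, and then to produce a limit cycle by a small perturbation of the parameters inside \eqref{eq:main} that makes the first of the two Lyapunov quantities $l_2,l_3$ from the proof of Theorem \ref{theo:center} change sign while the second stays bounded away from zero. Since the linearization of \eqref{eq:main} at the origin is the rotation $(-y,x)^t$ for all values of the parameters, the origin is always a center or a weak focus, and the mechanism to be used is the classical birth of a limit cycle from a weak focus of higher order under an effective unfolding.

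First I would pin down the weak focus. Evaluating $l_2$ and $l_3$ at $b_1=a_2=0$, $b_2=a_1=1$, $a_3=-3$ gives $l_2=\tfrac{\pi}{2}(-3+3)=0$ and $l_3=-\tfrac{\pi}{2}(23-21)=-\pi\neq 0$. By the bound of \cite{ARJMAA03} recalled above, a fine focus of \eqref{eq:main} has order at most $2$, so $l_2=0$ together with $l_3\neq 0$ forces the origin of this particular system to be a weak focus of order exactly $2$ (in particular not a center, consistently with Theorem \ref{theo:center}).

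Next comes the unfolding. Keep $b_1=a_2=0$, $b_2=a_1=1$ and let the single parameter $a_3=-3+\varepsilon$ vary; then $l_2=-\tfrac{\pi}{2}\varepsilon$ and $l_3=-\pi-\tfrac{7\pi}{2}\varepsilon$. In polar coordinates the system becomes the Abel equation \eqref{ec_Abel}, whose Poincaré return map $\Pi$ on $\{\theta=0,\ \rho>0\}$ is real-analytic; let $\delta(\rho)=\Pi(\rho)-\rho$ be its displacement function. At $\varepsilon=0$ one has $\delta(\rho)=c\, l_3\,\rho^{q}+o(\rho^{q})$ for some integer $q$ and some constant $c\neq 0$ (this is the first possibly nonzero coefficient, the focus having order $2$), so one may fix $\rho_0>0$ small with $\delta(\rho_0)\neq 0$, and put $s=\sgn\delta(\rho_0)$. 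By continuity of $\delta$ in the parameters on $[0,\rho_0]$, $\delta(\rho_0)$ keeps the sign $s$ for $|\varepsilon|$ small. For such $\varepsilon\neq 0$ the expansion of $\delta$ at $\rho=0$ starts at a fixed order $p<q$, namely $\delta(\rho)=c'\,l_2\,\rho^{p}+o(\rho^{p})=-\tfrac{\pi}{2}c'\varepsilon\,\rho^{p}+o(\rho^{p})$ with $c'\neq 0$; choosing the sign of $\varepsilon$ so that $\sgn(c'l_2)=-s$, we get that $\delta$ has sign $-s$ on some interval $(0,\rho_1)$, and after further shrinking $\varepsilon$ we may assume $\rho_1<\rho_0$. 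Since $\delta(\rho_0)$ has sign $s$, $\delta$ vanishes at some $\rho^{*}\in(0,\rho_0)$, and since $\delta$ is analytic and not identically zero ($l_2\neq 0$) this zero is isolated: it is a positive $2\pi$-periodic solution of \eqref{ec_Abel}, that is, a limit cycle of \eqref{eq:main} around the origin.

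The computations above are routine once $l_2$ and $l_3$ are known, so the only genuine step is the last one — turning a sign condition on the Lyapunov quantities into an honest isolated zero of the return map. It goes through because the perturbation is a true unfolding ($l_2$ depends nontrivially on $a_3$, hence can be made nonzero of either sign while $l_3$ hardly moves) and because, by the order bound of \cite{ARJMAA03}, no further Lyapunov quantity is hidden between $l_2$ and $l_3$; the only care needed is to fix the outer radius $\rho_0$ first and shrink $\varepsilon$ afterwards, so that the $O(\varepsilon)$ intermediate Taylor coefficients of $\delta$ cannot interfere.
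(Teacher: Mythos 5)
Your proposal is correct and follows essentially the same route as the paper: exhibit a system in \eqref{eq:main} whose origin is a weak focus of order two ($l_2=0$, $l_3\neq 0$) and then unfold it so that $l_2$ acquires the sign opposite to $l_3$, producing a small limit cycle by a degenerate Hopf bifurcation. The only differences are cosmetic — you use the simpler witness $b_1=a_2=0$, $b_2=a_1=1$, $a_3=-3+\varepsilon$ instead of the paper's explicit family with $l_2=\varepsilon$, $l_3=1$, and you spell out the displacement-function argument (fixing $\rho_0$ before shrinking $\varepsilon$) that the paper leaves as standard.
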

\begin{proof}
Consider next system inside the family \eqref{eq:main}:
  \begin{equation}\label{eq:cl}\left\{
\begin{aligned}
x'=-y+x\left(5x+y + \frac{1+120a_2\pi -82\varepsilon}{74\pi} x^3 + a_2 x^2 y + \frac{-3+10a_2\pi+98\varepsilon}
{74 \pi}x y^2\right),\\
y'=\phantom{-}x+y\left(5x+y + \frac{1+120a_2\pi -82\varepsilon}{74\pi} x^3 + a_2 x^2 y + \frac{-3+10a_2\pi+98\varepsilon}
{74 \pi}x y^2\right).
\end{aligned}
\right.
\end{equation}
Doing some simple computations, one can prove that its Lyapunov constants are
\[
l_2=\varepsilon,\quad l_3=1.
\]
Choosing $\varepsilon<0$ small enough, one limit cycle is born from the origin by a degenerate Hopf bifurcation.    
\end{proof}
\begin{rema}
Although the order of weakness of the focus is two, no more limit cycles can be created by a Hopf bifurcation inside the family \eqref{eq:main}. This is because for the family we are studying the divergence of the system is zero.
\end{rema}

Concerning the number of limit cycles that system \eqref{eq:main} can have, in \cite{GasTor} the authors
studied rigid systems with $F(x,y)=F_0(x,y)+F_m(x,y)+F_n(x,y),$ being $F_k$ a homogeneous polynomial of degree $k$. For low degrees of $m,n$ they found lower bounds for the number of limit cycles. The best result they obtained for $F_0(x,y)\equiv 0$ and $m=1$ (which is the case of system \eqref{eq:main}) was 1, which matches what we have obtained in the previous result. 

 As it has been mentioned in the introduction, the solutions of the family \eqref{eq:main} are in one-to-one correspondence with the solution of the Abel equation \eqref{ec_Abel}. In the following result we  use a specific criteria for Abel equations  in order to bound the number of limit cycles for a subfamily of system \eqref{eq:main}.
 
For this result, instead of setting the parameter $a_4$ equal to zero, it is more convenient to fix $b_1=0$. This can be done without loss of generality by doing a rotation of $\theta$. In the rest of the paper we will be working with $a_4=0$, this is, with \eqref{eq:main}.

\begin{theo} Consider the family \eqref{eq:rigido} with the function $F$ being the one defined in \eqref{funcionF}, for which it is not restrictive to assume $b_1=0,$ that is, the family
\begin{equation*}
\left\{
\begin{aligned}
x'=-y+x (b_2 y + a_1 x^3 + a_2 x^2 y + a_3 x y^2+a_4y^3 ),\\
y'=\phantom{-}x+y(b_2 y + a_1 x^3 + a_2 x^2 y + a_3 x y^2+a_4y^3).
\end{aligned}
\right.
\end{equation*}
If $a_1 a_3\geq 0$ then 
the system has no limit cycles.   
\end{theo}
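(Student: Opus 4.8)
The plan is to work with the Abel equation \eqref{ec_Abel} rather than with the planar system. By the correspondence recalled in the introduction, limit cycles of \eqref{eq:main} are exactly the positive $2\pi$-periodic solutions $r(\theta)$ of \eqref{ec_Abel}, and with the normalization $b_1=0$ used here one has $B(\theta)=b_2\sin\theta$, so $\int_0^{2\pi}B(\theta)\,d\theta=0$. If $b_2=0$ then $b_1=b_2=0$ and the origin is a center by Theorem~\ref{theo:center}(1), so there is nothing to prove; thus I would assume $b_2\neq 0$ throughout.

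The first real step is to reformulate the hypothesis. Writing
\[
A(\theta)=\cos\theta\,\bigl(a_1\cos^2\theta+a_3\sin^2\theta\bigr)+\sin\theta\,\bigl(a_2\cos^2\theta+a_4\sin^2\theta\bigr),
\]
the factor $g(\theta):=a_1\cos^2\theta+a_3\sin^2\theta$ takes values on the segment joining $a_1$ and $a_3$, so $a_1a_3\ge 0$ is \emph{equivalent} to $g$ having constant sign. Reversing time (which replaces $F$ by $-F$, hence reverses the orientation of every periodic orbit but preserves their number) I may assume $g(\theta)\ge 0$; the degenerate case $a_1=a_3=0$ I would dispose of separately, since then $A(\theta)$ carries the factor $\sin\theta$ and the resulting equation is reversible with respect to $\theta\mapsto-\theta$, so the origin is a center.

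Next I would squeeze sign information out of periodicity. Integrating $(-1/r)'=B+Ar^2$ over a period gives $\int_0^{2\pi}A(\theta)r(\theta)^2\,d\theta=0$, and integrating $\cos\theta\,(-1/r)'$ by parts, using $\int_0^{2\pi}B(\theta)\cos\theta\,d\theta=0$, gives the identity
\[
\int_0^{2\pi} g(\theta)\cos^2\theta\, r(\theta)^2\,d\theta
= -\int_0^{2\pi}\left(\frac{\sin\theta}{r(\theta)}+h(\theta)\cos\theta\sin\theta\, r(\theta)^2\right)d\theta,
\]
with $h(\theta)=a_2\cos^2\theta+a_4\sin^2\theta$; note that the left-hand side is \emph{strictly positive} as soon as $g\not\equiv 0$ and $r>0$. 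To control the right-hand side I would use the M\"obius change $v=r/(1-b_2\cos\theta\,r)$, which eliminates the $r^2$-term and turns \eqref{ec_Abel} into $v'=A(\theta)\,v^4/(1+b_2\cos\theta\,v)^2$, and then combine the analogous identities for $v$ with the sign of $g$; the target is a contradiction forcing $r\equiv 0$. Equivalently, the same bookkeeping can be packaged as the construction of a Dulac function on the punctured plane, of the form $(x^2+y^2)^{-3/2}$ corrected by a positive factor built from $g$, whose divergence keeps a fixed sign; this is the "criterion for Abel equations" that the statement alludes to.

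The hard part, and the reason the statement is genuinely stronger than the "at most one limit cycle" bounds that follow from Lins Neto-type counting, is that \emph{both} coefficients of \eqref{ec_Abel} change sign --- indeed $A(\theta+\pi)=-A(\theta)$ and $B(\theta+\pi)=-B(\theta)$ always hold --- so no monotonicity argument, and no verbatim application of a standard Abel criterion requiring a sign-definite coefficient, is available. One must show that the sign-changing contributions attached to the $\sin\theta$ factor (the $b_2$-term, and the $a_2,a_4$-part of $A$) can never compensate the strictly positive contribution of $g(\theta)\ge0$: this balancing --- essentially, that the structural constraint $b_1=0$ together with $a_1a_3\ge 0$ suffices even though neither coefficient is individually of constant sign --- is where the actual work lies, and the delicate point is the precise choice of auxiliary weight (equivalently, of Dulac function) that makes the cancellation visible.
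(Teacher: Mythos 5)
Your setup is the right one --- passing to the Abel equation \eqref{ec_Abel}, observing that $a_1a_3\ge 0$ is exactly the condition making $a_1\cos^2\theta+a_3\sin^2\theta$ sign-definite, and deriving the integral identities
$\int_0^{2\pi}A(\theta)r(\theta)^2\,d\theta=0$ and
$\int_0^{2\pi}(a_1\cos^2\theta+a_3\sin^2\theta)\cos^2\theta\,r^2\,d\theta=-\int_0^{2\pi}\bigl(\sin\theta/r+(a_2\cos^2\theta+a_4\sin^2\theta)\cos\theta\sin\theta\,r^2\bigr)\,d\theta$,
which are both correct. But the proof stops precisely at the point where a proof is needed: the right-hand side of your identity has no sign, and you yourself write that controlling it --- choosing the ``auxiliary weight'' or Dulac function that makes the cancellation visible --- is ``where the actual work lies.'' Neither the M\"obius change $v=r/(1-b_2\cos\theta\,r)$ (which, incidentally, is only a diffeomorphism on the region $1-b_2\cos\theta\,r>0$, so its use on an arbitrary positive periodic solution needs justification) nor the alluded-to Dulac function is ever actually used to produce the contradiction. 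As it stands this is a plan, not a proof: the conclusion ``no limit cycles'' is never reached.

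You are also mistaken in asserting that no existing Abel-equation criterion applies because both coefficients change sign. The paper's proof consists exactly of such an application: writing $A=f+g$ with $f(\theta)=\cos\theta\,(a_1\cos^2\theta+a_3\sin^2\theta)$, $g(\theta)=\sin\theta\,(a_2\cos^2\theta+a_4\sin^2\theta)$ and $h(\theta)=b_2\sin\theta$, the hypothesis $a_1a_3\ge0$ forces $f$ to change sign only where $\cos\theta$ does, while $g$ and $h$ vanish on the zero set of $\sin\theta$; Theorem~2.4 of \cite{BT} is designed precisely for coefficients that are individually sign-changing but whose sign changes are aligned in this way, and it yields the non-existence of positive periodic solutions at once. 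So the missing step in your argument is not an unavoidable obstruction but exactly the content of the criterion you would need to either cite or reprove. (Your side remarks are fine: the reduction to $b_2\ne0$ via Theorem~\ref{theo:center}, and the reversibility argument disposing of $a_1=a_3=0$, are both correct but peripheral.)
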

\begin{proof}
We transform the system into the Abel equation \eqref{ec_Abel} and denote 
\[
\begin{split}
f(\theta) &=  \cos(\theta) (a_1\cos^2(\theta)+a_3\sin^2(\theta)),\\
g(\theta) &= \sin(\theta) (a_2\cos^2(\theta)+a_4\sin^2(\theta)),\\
h(\theta) &= b_2 \sin(\theta).
\end{split}
\]
Now, applying Theorem~2.4 of \cite{BT} we conclude. 
\end{proof}

\section{Vector field on the Poincaré sphere}\label{Sec3}

In order to understand the full behaviour of a planar system, it is necessary to look at the solutions that approach or escape from infinity.  To do this, one must compactify the plane. There are different compactifications in the literature, and the most common ones are those that allow infinity to be seen as a point or as a
circle.  In this paper we will use the Poincaré compactification, which transforms the planar system into two copies of a vector field but now defined in the sphere. These two copies are separated by the equator of the sphere, which contains the information about the dynamics at infinity. For a more detailed description of this compactification see \cite{Dumo}.

In our case, as in all systems having its highest degree as a radial one (see \cite{BravoFerTer}), the infinity is a circle of singularities.  Therefore, we have to apply a slight  modification of the classical Poincar\'e compactification in order to deal with it, see again \cite{BravoFerTer}.

We exclude from our study the case where $a_1=a_2=a_3=0,$ since in this case, as we have mentioned before, the resulting system is a Riccati equation of separable variables and it can be easily integrated.

\subsection{Poincaré compactification}

We consider the real plane embedded in $\mathbb{R}^3$ as the tangent plane to $\mathbb{S}^2$ in the north pole. Consequently, the points of the plane are of the form $(x,y,1)$. We can project each point of the plane over the upper half sphere taking the straight line between the point and the center of the sphere, this is, if $(z_1,z_2,z_3)$ are the coordinates of the assigned point in the sphere, then $x=z_1/z_3$ and $y=z_2/z_3$, or conversely
\[
z_1=\frac{x}{\Delta},\quad 
z_2=\frac{y}{\Delta},\quad 
z_3=\frac{1}{\Delta},
\]
where $\Delta=\sqrt{x^2+y^2+1}$. The projection in the south hemisphere is the same, changing the sign of every component of the field. 

Using this construction, we can  project the vector field in the plane given by \eqref{eq:main} over each of the hemispheres. 

\begin{prop}

The vector field \eqref{eq:main} is topologically equivalent to the restriction to the northern hemisphere of the system

\begin{equation}\label{eq:campoesfera}
\begin{cases}
z_1'&=z_3 \left(-z_2 z_3 + b_1 z_1^2 z_3^2 + b_2 z_1 z_2 z_3^2  + a_1 z_1^4   + a_2 z_1^3 z_2 + a_3 z_1^2 z_2^2\right)\\
z_2'&=z_3 \left(z_1 z_3 + b_1 z_1 z_2 z_3^2 + b_2 z_2^2z_3^2  + a_1 z_1^3 z_2 + a_2 z_1^2 z_2^2  + a_3 z_1 z_2^3  \right)\\
z_3'&=\left(z_3^2-1\right)  \left(b_1 z_1 z_3^2 + b_2 z_2 z_3^2 + a_1 z_1^3 + a_2 z_1^2 z_2 + a_3 z_1 z_2^2\right).
\end{cases}
\end{equation}

\end{prop}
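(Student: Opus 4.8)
The plan is to derive the compactified system by a direct change of coordinates, following the modified Poincaré compactification for systems whose top-degree part is radial, as in \cite{BravoFerTer}. Concretely, I would start from the finite system \eqref{eq:main}, written as $\dot x = X(x,y)$, $\dot y = Y(x,y)$ with
\[
X = -y + x\,\tilde F(x,y), \qquad Y = x + y\,\tilde F(x,y),
\]
where $\tilde F(x,y) = b_1 x + b_2 y + a_1 x^3 + a_2 x^2 y + a_3 x y^2$, and substitute $x = z_1/z_3$, $y = z_2/z_3$ with $z_1^2+z_2^2+z_3^2=1$. Differentiating the relations $z_1 = x/\Delta$, $z_2 = y/\Delta$, $z_3 = 1/\Delta$ with $\Delta = \sqrt{x^2+y^2+1}$ and using $\dot\Delta = (x\dot x + y\dot y)/\Delta$, one gets
\[
\dot z_1 = \tfrac{1}{\Delta}\bigl(\dot x - z_1(x\dot x + y\dot y)/\Delta\bigr),
\]
and analogous expressions for $\dot z_2$, $\dot z_3$. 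Substituting $X$, $Y$ here, the radial terms $x\tilde F$, $y\tilde F$ contribute $x\dot x + y\dot y = (x^2+y^2)\tilde F$ plus the rotational part $-xy+xy=0$, so a large cancellation occurs. After clearing denominators one obtains a polynomial vector field on $\mathbb{S}^2$; the remaining freedom is an overall nonzero scalar factor (a time rescaling), which is where the choice of parametrization producing exactly the right-hand sides of \eqref{eq:campoesfera} comes in.

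The key computational steps, in order, are: (i) write $\dot z_i$ in terms of $\dot x,\dot y$ and $\Delta$; (ii) insert $X,Y$ and exploit that the rotational part $(-y,x)$ is orthogonal to $(x,y)$ so it drops out of $x\dot x + y\dot y$; (iii) re-express everything in the $z_i$ using $x=z_1/z_3$, $y=z_2/z_3$, clearing the powers of $z_3$ coming from $\Delta = 1/z_3$; (iv) observe that the resulting field carries a common factor and multiply by the appropriate power of $z_3$ (equivalently, rescale time by a factor that does not vanish on the northern hemisphere $z_3>0$) to land on the polynomial form \eqref{eq:campoesfera}; (v) check that this time rescaling is positive on $z_3 > 0$, so the orbits and their orientation are preserved, giving topological equivalence on the northern hemisphere. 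One should also verify directly that on $z_3=1$ (the north pole chart) the system \eqref{eq:campoesfera} reduces, up to the rescaling, to \eqref{eq:main}: indeed $z_3'=(z_3^2-1)(\cdots)$ vanishes there, and $z_1', z_2'$ reduce to $-z_2+z_1\tilde F$, $z_1+z_2\tilde F$, matching \eqref{eq:main}.

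The main obstacle is purely bookkeeping: tracking the powers of $z_3$ (i.e.\ of $\Delta$) correctly through the substitution and identifying the precise scalar factor that must be removed so that the stated polynomial expressions — in particular the factor $(z_3^2-1)$ in $z_3'$ and the single overall factor $z_3$ in $z_1', z_2'$ — come out exactly. The conceptual content (the radial part survives as a common factor, the rotational part is killed by orthogonality, and the equator $z_3=0$ is therefore invariant and filled with singularities since the whole field vanishes there to the needed order) is standard; once the normalization is pinned down, topological equivalence on $\{z_3>0\}$ follows because the discarded factor is strictly positive there. I would present steps (i)–(iii) compactly, state the rescaling explicitly in (iv), and note (v) as an immediate consequence.
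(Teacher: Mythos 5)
Your proposal follows essentially the same route as the paper: differentiate the projection $z_i$ along solutions, use the orthogonality of the rotational part to simplify, clear the powers of $z_3$ via a time reparametrization (the paper uses the factor $z_3^3$, positive on the northern hemisphere), and invoke the sphere relation $z_1^2+z_2^2+z_3^2=1$ to reach the stated polynomial form. The plan is correct and the bookkeeping you flag is exactly the routine verification the paper carries out.
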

\begin{proof}
Consider the projection of the plane $(x,y,1)$ to the Poincaré sphere defined by 
$z_1 = x/\Delta$,
$z_2 = y/\Delta$,
$z_3 = 1/\Delta$,
where $\Delta=\sqrt{x^2+y^2+1}$ and $z_1^2+z_2^2+z_3^2=1$. Now, deriving the projection along the solutions we obtain that the projection of the vector field \eqref{eq:main} to the northern hemisphere is 
\begin{equation*}
\begin{cases}
z_1'&= (1-z_1^2) P(z_1/z_3,z_2/z_3) - z_1z_2 Q(z_1/z_3,z_2/z_3) \\
z_2'&= -z_1z_2 P(z_1/z_3,z_2/z_3) + (1-z_2^2) Q(z_1/z_3,z_2/z_3)\\
z_3'&= -z_3\left(z_1 P(z_1/z_3,z_2/z_3) + z_2 Q(z_1/z_3,z_2/z_3) \right).
\end{cases}
\end{equation*}
Reparametrizing time by a factor $z_3^3$  and using that $z_1^2+z_2^2+z_3^2=1$, we obtain \eqref{eq:campoesfera}.

\end{proof}
\subsection{Critical points at infinity}

In order to study the infinite critical points, we have to use local charts. Since we have taken the parameter $a_4=0$, we have a common factor $x$ in the highest degree of the original system which becomes a common factor $z_1$ in the highest degree of the system in the sphere; therefore the chart that will better suit the study of the infinite critical points, in order to see all of them, will be  $\mathcal{U}_2.$

Remember that here the infinite critical points are present in symmetric pairs. This is, after we know how many critical points we have in the chart $\mathcal{U}_2$, the total number of infinite critical points of the system will be the double of this amount, as for each point in the chart $\mathcal{U}_2$ we have its symmetric. 

In the chart $\mathcal{U}_2$, the expression of the variables  is $(x,y)=(\frac{u}{v},\frac{1}{v})$ and the system, after a re-scaling of $v^4,$ is
\begin{equation*}\label{U2}
\begin{cases}
u'&=-v^2(u^2 + 1),\\
v'&=-a_1u^3 - b_1uv^2 - uv^3 - a_2u^2 - b_2v^2 - a_3u.
 \end{cases}
 \end{equation*}
The infinite critical points at this chart will be $(\hat{u},0)$, with $\hat{u}$ being a root of the cubic $g(u)=-u(a_1 u^2+a_2 u+a_3)$. Consequently, there will be as many infinite critical points in this chart as roots of $g(u)$. 

For the rest of this section, when we refer to "simple", "double" or "triple" critical points, we will just make reference to the multiplicity of $\hat{u}$ as a root of $g(u)$, and not to their description as critical points. 

Note that, if $a_1=0$, $a_2\neq0$ or if $a_1=a_2=0$, we are no longer able to see one or two of the infinite critical points in the chart $\mathcal{U}_2$, respectively. If we study the chart $\mathcal{U}_1,$ the critical points will be $(\tilde{u},0)$ with $\tilde{u}$ being a root of $h(u)=-(a_3 u^2+a_2 u+a_1)$. Observe that in these cases the critical points that cannot be seen in the chart $\mathcal{U}_2$ will correspond with the root zero of $h(u)$ in the chart $\mathcal{U}_1$.

We will divide the study of the infinite critical points in two main cases: if they are all simple or if not. If $a_1\neq0$, note that, if we denote by $D=a_2^2-4a_1a_3$ the discriminant of the quadratic factor of $g(u)$, then the infinite critical points will be simple if and only if  $a_3\, D\neq0$. 

Again, if $a_1=0$ the critical points that we can no longer see in the chart $\mathcal{U}_2$ correspond to the root zero of $h(u)$ in the chart $\mathcal{U}_1$. The only case in which all the roots are simple in both charts is $a_2\,  a_3\neq0$.

\begin{prop}
Consider family \eqref{eq:campoesfera}. If all the infinite critical points of the system are simple, then they are cusps.

More concretely, denoting $D=a_2^2-4a_1a_3$, \eqref{eq:campoesfera} has only simple critical points if and only if  $(a_1^2+a_2^2)\, a_3\, D\neq0$. Moreover,
\begin{itemize}
    \item If $D>0$, there are three infinite critical points in the chart $\mathcal{U}_2$, which are cusps.
    \item If $D<0$, there is only one infinite critical point in the chart $\mathcal{U}_2$, which is a cusp.
\end{itemize}

\end{prop}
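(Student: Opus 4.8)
The plan is to work in the chart $\mathcal{U}_2$ (and, when $a_1=0$ or $a_1=a_2=0$, supplement with the chart $\mathcal{U}_1$), reduce the system near each infinite critical point $(\hat u,0)$ to a normal form, and show the local phase portrait is that of a cusp. First I would record the chart-$\mathcal{U}_2$ system already given in the excerpt,
\[
\begin{cases}
u'=-v^2(u^2+1),\\
v'=-a_1u^3-a_2u^2-a_3u-b_1uv^2-b_2v^2-uv^3,
\end{cases}
\]
whose singular points on $v=0$ are the roots of $g(u)=-u(a_1u^2+a_2u+a_3)$. The "all simple" condition is the statement that $g$ has three distinct simple roots (including possibly roots that escape to $\mathcal{U}_1$), which is exactly the nondegeneracy of the relevant coefficients and discriminant. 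I would first nail down the algebra of \emph{when} this happens: if $a_1\neq 0$ the three roots of $g$ are $0$ and the two roots of $a_1u^2+a_2u+a_3$, and these are all simple and distinct iff $a_3\neq 0$ (so $0$ is not a root of the quadratic) and $D=a_2^2-4a_1a_3\neq 0$; if $a_1=0$, $a_2\neq 0$, one root runs off to $\mathcal{U}_1$ and, passing to $h(u)=-(a_3u^2+a_2u+a_1)=-(a_3u^2+a_2u)$, simplicity in both charts forces $a_2a_3\neq 0$; combining these cases gives the clean criterion $(a_1^2+a_2^2)\,a_3\,D\neq 0$ (noting that when $a_1=0$ one has $D=a_2^2$, so the factor $D$ is automatic). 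The counting $D>0\Rightarrow$ three points in $\mathcal{U}_2$ and $D<0\Rightarrow$ one point in $\mathcal{U}_2$ is then immediate from the sign of the discriminant of the quadratic factor.

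Next, the geometric core: at a simple root $\hat u$ of $g$, translate $u\mapsto u+\hat u$ so the singularity sits at the origin. The linear part of the vector field there is
\[
\begin{pmatrix}0 & 0\\ -g'(\hat u) & 0\end{pmatrix},
\]
since $u'=O(v^2)$ has no linear part at all and $v'$ has linear part $g'(\hat u)\,u$ (the $v$-terms in $v'$ are all at least quadratic). Because $\hat u$ is a simple root, $g'(\hat u)\neq 0$, so the linear part is a nonzero nilpotent matrix. This places us squarely in the nilpotent-singularity setting, and I would invoke the standard classification (e.g. Andreev's theorem / the nilpotent normal-form results as in Dumortier–Llibre–Artés, reference \cite{Dumo}): write $v$ as the "fast" variable, solve $v'=0$ implicitly... — more precisely, since the $u'$-equation reads $u'=-v^2(u^2+1)$, with $\hat u^2+1\neq 0$ always, the relevant function is obtained by substituting into the other equation. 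Put the system in the form $\dot x=y$, $\dot y=a_k x^k+\dots$ after a linear rescaling swapping the roles so the nilpotent part is $\begin{psmallmatrix}0&1\\0&0\end{psmallmatrix}$; then compute the lowest-order term. The expected outcome is that the $y$-equation's leading nonlinearity is of the form $\alpha x^2(1+\cdots)$ with $\alpha\neq 0$ (coming from the product of the simple-root linear term $g'(\hat u)u$ with the $-v^2$ factor — i.e. after eliminating, one gets a quadratic leading term), and that the coefficient of the $xy$-type term either vanishes or is dominated, which by the nilpotent classification yields a cusp rather than a saddle-node or a node/focus-type point.

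The main obstacle I anticipate is carrying out this nilpotent reduction cleanly and uniformly in $\hat u$ and in the parameters: one must identify the correct leading exponent and check the sign/nonvanishing conditions of Andreev's criterion (the classic dichotomy between cusp, saddle-node, and the elliptic/hyperbolic-sector cases depends on whether the leading exponent in the "$y$"-equation is even or odd and on an auxiliary coefficient). The delicate point is that the $u'$ equation contributes a factor $v^2$ rather than $v$, so a naive reading would suggest a degenerate situation; the resolution is that after the correct blow-up or directional rescaling, the simplicity of the root feeds in a nonzero quadratic term, and one must verify that no higher-order accident (e.g. a vanishing Andreev coefficient forced by the specific form of \eqref{eq:campoesfera}) occurs — here the constraint $u^2+1\neq 0$ and $g'(\hat u)\neq 0$ are what guarantee genuine cusp behaviour. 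Once the normal form is pinned down at one simple root, the same computation applies verbatim to the other roots, and the count by the sign of $D$ finishes the proposition. For the cases $a_1=0$ I would remark that the root at $0\in\mathcal{U}_1$ is handled identically using $h(u)$, with $h'(0)=-a_2\neq 0$ playing the role of $g'(\hat u)$.
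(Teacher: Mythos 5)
Your proposal is correct and follows essentially the same route as the paper: establish the algebraic criterion from the roots of $g(u)$ (with the $a_1=0$ case pushed to $\mathcal{U}_1$ via $h$), observe that simplicity of the root makes the linear part nonzero nilpotent, swap variables into Andreev normal form, and conclude a cusp from the even leading exponent $m=2$, which is guaranteed precisely by $g'(\hat u)\neq 0$ and $\hat u^2+1\neq 0$ as you identify. The only minor imprecision is attributing the quadratic leading term of $F$ to a ``product'' of $g'(\hat u)u$ with $-v^2$; after the swap it comes directly from the equation $\dot v=-v^2(u^2+1)$ evaluated along $v=f(u)=O(u^2)$, with $g'(\hat u)$ entering only through the time normalization --- but this does not affect the argument.
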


\begin{proof}
    Assume first that $a_1\neq0$. Either $D>0$ or $D<0$. We remind that in this case the infinite critical points in the chart $\mathcal{U}_2$ are $(\hat{u},0)$, with $\hat{u}$ being a root of the cubic $g(u)=-u(a_1 u^2+a_2 u+a_3)$, and that $D$ is the discriminant of the quadratic factor of $g(u)$.

    If $D>0$, the three simple critical points are nilpotent ones.  We proceed to apply Andreev Nilpotent Theorem (see \cite{Andreev} or \cite{Dumo}). We will do the computations for $(0,0)$ and the other two critical points  follow in a similar way.

First of all, we do a re-scaling and we interchange the name of the variables, in order to transform the system in its nilpotent normal form. Hence, $\tilde{u}=v, \tilde{v}=u$ and $t=-a_3s.$ Remember that in this case $a_3\neq0$. Now, dropping the tildes and denoting by a dot the derivative with respect to $s,$ the system is transformed into
\[
\begin{cases}
   \dot{u}&=v+\frac{a_1v^3 + b_1u^2v + u^3v + a_2v^2 + b_2u^2}{a_3},\\\dot{v}&=\frac{u^3(v^2 + 1)}{a_3}.
\end{cases}
\]
We denote $\dot{u}=v+A(u,v), \dot{v}=B(u,v).$ 
If we solve the equation $v+A(u,v)=0,$ we get the
 function $v=f(u)=-\frac{b_2}{a_3}u^2+O(u^3).$
 
 Now, we can compute the functions $F(u)=B(u,f(u))$ and\\ $G(u)=\left(\frac{\partial A}{\partial u}+\frac{\partial B}{\partial v}\right)(u,f(u)),$ and we get
\[\begin{cases}
F(u)&=\frac{u^2}{a_3}+O(u^3),\\
G(u)&=\frac{2b_2}{a_3}u+O(u^2).
\end{cases}\]
Thus, in the Andreev Nilpotent Theorem, we get that $m=2, n=1.$ Consequently, this critical point is a cusp point.

We can proceed in the same way with the other two critical points, getting the same result: in this case, all the infinite critical points are cusps.

If $D<0$, there exists only one infinite critical point, $(0,0),$ being also a nilpotent one. We can proceed exactly in the same way as before, getting that the infinite critical point is also a cusp.

If $a_1=0$, we would have to study the two simple roots of $g(u)$ in the chart $\mathcal{U}_2$, and the simple root zero of $h(u)$ in the chart $\mathcal{U}_1$.

Following an equivalent procedure that in the previous cases, we get that the two simple points in $\mathcal{U}_2$ are nilpotent and, applying again Andreev Nilpotent Theorem, they are cusps. 

Concerning the study of the root of $h(u)$ in the other chart, note that the case $a_1=0$ is equivalent to the case $a_4=0$ by swapping $x$ and $y$ in the original system. Consequently, the computations would be equivalent to the ones already done, and this point will be a cusp. 
\end{proof}

We focus now on the case in which we have infinite critical points in the chart $\mathcal{U}_2$ with multiplicity greater than one. With a suitable change of variables, we can assume that $u=0$ is the root of greater multiplicity of $g(u)$, with new parameters $\bar{b}_1,\bar{b}_2,\bar{a}_1,\bar{a}_2,\bar{a}_3$. Assume first that $\bar{a}_1\neq0$.

Then note that $g(u)$ will have $u=0$ as a double root if and only if $\bar{a}_3=0$, $\bar{a}_2\neq0$, and as a triple one if and only if $\bar{a}_2=\bar{a}_3=0$. 

Assume now that $\bar{a}_1=0$. The only option in which there are critical points with multiplicity greater that one is $\bar{a}_3=0$, $\bar{a}_2\neq0$, when we have a double critical point in the chart $\mathcal{U}_2$ and a simple one in the chart $\mathcal{U}_1$. Note that if $\bar{a}_2=0$, $\bar{a}_3\neq0$, the double point would be in the chart $\mathcal{U}_1$ and not in the zero of chart $\mathcal{U}_2$, which is not possible after our change of variables. 

\begin{prop}
Consider family \eqref{eq:campoesfera} with infinite critical points with multiplicity greater than one, and after the change of variable described above. The infinite critical points of the system are either cusps or consist of two parabolic and two hyperbolic sectors.

More concretely, if $\bar{a}_1\neq0$:
    \begin{itemize}
        \item If $\bar{a}_3=0$, $\bar{a}_2\neq0$, the system in the chart $\mathcal{U}_2$ has a simple critical point, which is a cusp, and a double one that has two hyperbolic and two parabolic sectors. 
        \item If $\bar{a}_2=\bar{a}_3=0$, the system in the chart $\mathcal{U}_2$ only has a triple critical point, which has two hyperbolic and two parabolic sectors when $\bar{b}_2\neq0$, and is a cusp when $\bar{b}_2=0$.
    \end{itemize}
If $\bar{a}_1=0$, in which case the only possibility is $\bar{a}_3=0$, $\bar{a}_2\neq0$, the system in the chart $\mathcal{U}_2$ has a double critical point, which has two hyperbolic and two parabolic sectors, and a simple one that  is a cusp. 
        
\end{prop}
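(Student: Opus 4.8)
The plan is to follow the same scheme as in the preceding proposition — work in the charts $\mathcal{U}_2$ and $\mathcal{U}_1$ and classify each critical point sitting on the equator — the only difference being that the point to be analysed is now more degenerate. After the normalising change of variables the system in $\mathcal{U}_2$ is
\[
\begin{cases}
u'=-v^2(u^2+1),\\
v'=-\bar a_1 u^3-\bar b_1 u v^2-u v^3-\bar a_2 u^2-\bar b_2 v^2-\bar a_3 u,
\end{cases}
\]
with $u=0$ a root of $g(u)=-u(\bar a_1 u^2+\bar a_2 u+\bar a_3)$ of multiplicity $2$ (when $\bar a_3=0$, $\bar a_2\neq0$) or $3$ (when $\bar a_2=\bar a_3=0$). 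A direct check shows that in every one of these cases the linear part of the field at the origin vanishes identically, so Andreev's Nilpotent Theorem cannot be applied directly; the origin must first be desingularised by successive blow-ups, see \cite{Dumo}.

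First I would dispose of the simple critical points that accompany the degenerate one. If $\bar a_1\neq0$, $\bar a_3=0$, $\bar a_2\neq0$, then $g$ has the simple root $u_0=-\bar a_2/\bar a_1$, and at the corresponding point the Jacobian is nilpotent, with the role played by $-a_3$ in the previous proof now played by $g'(u_0)$; the Andreev computation is otherwise identical and gives a cusp. Likewise, when $\bar a_1=0$ the simple root that is no longer visible in $\mathcal{U}_2$ appears in $\mathcal{U}_1$, and, as already noted, the case $\bar a_1=0$ is transformed into the case $\bar a_4=0$ by exchanging $x$ and $y$, so that point is again a cusp. Everything therefore reduces to the point at the origin of $\mathcal{U}_2$.

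For the degenerate point the core of the argument is a blow-up analysis. When $u=0$ is a double root ($\bar a_3=0$, $\bar a_2\neq0$, and either $\bar a_1\neq0$ or $\bar a_1=0$) the $2$-jet at the origin is the homogeneous quadratic $(-v^2,\,-\bar a_2 u^2-\bar b_2 v^2)$, whose characteristic directions are the real roots of $w^3-\bar b_2 w^2-\bar a_2$ (none of which is the $v$-direction, since $\bar a_2\neq0$); performing the blow-up $v=uw$, completed with $u=vz$ to control the $v$-direction, dividing out the common factor and rescaling time, the critical points on the exceptional divisor $\{u=0\}$ sit at those roots $w_0$, where the Jacobian has eigenvalues $-w_0^2$ (transverse to the divisor) and $w_0(3w_0-2\bar b_2)$ (along it); hence each is a hyperbolic saddle or node, a semi-hyperbolic point occurring only on the resonance $3w_0=2\bar b_2$, which a further blow-up resolves. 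Blowing down, and keeping track of the sign of $u'$ off the divisor, then yields a point with two hyperbolic and two parabolic sectors. When $u=0$ is a triple root ($\bar a_2=\bar a_3=0$) the $2$-jet degenerates to $(-v^2,-\bar b_2 v^2)$, a multiple of a constant field; after the blow-up $v=uw$ one finds on the divisor, if $\bar b_2\neq0$, a hyperbolic saddle at $w=\bar b_2$ and a nilpotent point at $w=0$, the latter classified by a second blow-up and Andreev's theorem, the outcome after blowing down being again two hyperbolic and two parabolic sectors; if $\bar b_2=0$ the blow-up $v=uw$ leaves a single nilpotent point $(0,0)$ on the divisor, and putting it in the form $\dot X=Y+A$, $\dot Y=B$ one gets $F(X)=X^{5}+O(X^{6})$, so by Andreev's theorem ($\alpha=5$ odd, positive leading coefficient) it is a topological saddle, whose two separatrices transverse to the divisor collapse, on blowing down, into a cusp. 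Together with the previous paragraph this proves the statement, the dichotomy "cusp versus two parabolic and two hyperbolic sectors" being exactly the distinction between the triple-root case $\bar b_2=0$ and all the remaining cases.

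The step I expect to be the main obstacle is the blow-up bookkeeping in the degenerate cases: keeping orientations consistent when dividing by $u$ on $\{u<0\}$, gluing the two directional charts of each blow-up correctly, and, most delicately, dealing with the non-hyperbolic points that appear on the exceptional divisors in the boundary subcases — the resonance $3w_0=2\bar b_2$ for a double root, and the value $\bar b_2=0$ for a triple root — since it is precisely there that the local phase portrait switches between a cusp and a point with two parabolic sectors, so these configurations have to be worked out in full rather than dismissed.
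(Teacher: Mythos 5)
Your proposal is correct and follows essentially the same route as the paper: the accompanying simple points are cusps by the Andreev computation, and the degenerate point at the origin of $\mathcal{U}_2$ is resolved by the directional blow-up $v=uw$, whose critical points on the exceptional divisor are the roots of $w^3-\bar b_2w^2-\bar a_2$ with exactly the eigenvalues you state, leading to the same case distinction and the same conclusions (including the cusp in the triple-root case with $\bar b_2=0$, which the paper also obtains after further desingularization). The only point worth tightening is that to conclude ``two hyperbolic and two parabolic sectors'' after blow-down you should record the saddle/node distribution on the divisor (two saddles and one attracting node when the cubic has three simple roots), which follows at once from the sign of $\hat g'$ at consecutive roots together with the uniformly negative transverse eigenvalue $-w_0^2$.
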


\begin{proof}

For clarity, we will organize this proof in cases and subcases. Only some of them will be presented with full detail, as the other ones follow in a similar way. 

$\boldsymbol{\textbf{Case 1. }\bar{a}_1\neq0.}$ We will divide this case depending on the multiplicity of the root zero of $g(u)$.  

$\boldsymbol{\textbf{Case 1.1. }\bar{a}_3=0, \bar{a}_2\neq0}$. This is, we have a double critical point at $(0,0)$ and another simple critical point.

In this case, following a similar reasoning as in the previous result, the simple critical point is a cusp. In order to study the double point, we make a directional blow-up, $u=u_1, v=u_1v_1.$ After the change of time $s=t/u_1,$ the system is
\begin{equation*}
\begin{cases}
    u_1'=\frac{du_1}{ds}= -u_1(1+u_1^2)v_1^2,\\
v_1'=-\bar{a}_2-\bar{a}_1u_1-\bar{b}_2v_1^2-\bar{b}_1u_1v_1^2+v_1^3.
\end{cases}
\end{equation*}
We need to study the critical points on $u_1=0:$
\[
v_1'\Big|_{u_1=0}= -\bar{a}_2-\bar{b}_2v_1^2+v_1^3=:\hat{g}(v_1).
\]
This cubic polynomial will have one, two or three roots depending on the sign of the discriminant $-\bar{a}_2(27\bar{a}_2+4\bar{b}_2^3).$ It is very useful to work from this point ahead in terms of the roots of $\hat{g}(v_1)$ to simplify the analysis of the system after the blow up. 

Let $\alpha_1,\alpha_2,\alpha_3$ be the roots of $\hat{g}(v_1)$. First, note that if $\alpha_2=-\alpha_3,$ having into consideration the well known relationships between the coefficient and roots of a cubic, necessarily $\alpha_2=\alpha_3=0,$ implying $\bar{a}_2=0,$ which is not our current case. Consequently, we can assume $\alpha_2\neq -\alpha_3.$ 

In this case, by the usual relationships between the roots of a cubic, it follows that $\alpha_1=-\alpha_2\alpha_3(\alpha_2+\alpha_3)^{-1}.$ If the cubic had a triple root, then it would be located at zero, so $\bar{a}_2=\bar{b}_2=0,$ which again is not the case we are currently studying. Consequently, the critical points are either double or simple, so two new subcases appear. 

$\boldsymbol{\textbf{Case 1.1.1. } \hat{g}(v_1)\textbf{ has three simple roots.}}$ In this case we can pick $\alpha_2$ and $\alpha_3$ as new coefficients of the system with the change of parameters
\[\bar{a}_2=-\alpha_2^2\alpha_3^2(\alpha_2+\alpha_3)^{-1},\quad \bar{b}_2=(\alpha_2^2+\alpha_2\alpha_3+\alpha_3^2)(\alpha_2+\alpha_3)^{-1}.\]

When studying the eigenvalues of the jacobian matrix at the three roots, we get that:

\begin{enumerate}
    \item The eigenvalues of the jacobian matrix at $\alpha_2$ are $-\alpha_2^2$ and $\alpha_2(\alpha_2-\alpha_3)(\alpha_2+2\alpha_3)(\alpha_2+\alpha_3)^{-1}$.
    \item The eigenvalues of the jacobian matrix at $\alpha_3$ are $-\alpha_3^2$ and $-\alpha_3(\alpha_2-\alpha_3)(2\alpha_2+\alpha_3)(\alpha_2+\alpha_3)^{-1}$.
    \item The eigenvalues of the jacobian matrix at $-\alpha_2\alpha_3(\alpha_2+\alpha_3)^{-1}$ are $-\alpha_2^2\alpha_3^2(\alpha_2+\alpha_3)^{-2}$ and $\alpha_2\alpha_3(2\alpha_2+\alpha_3)(\alpha_2+2\alpha_3)(\alpha_2+\alpha_3)^{-2}.$ 
\end{enumerate}

If any of these eigenvalues is zero, we can conclude that two of the three roots coincide, which is not our current case. Thus every critical point has a negative eigenvalue and, studying the sign of the rest of them, we get that two of them must be positive, and the other one, negative. 

Consequently, two of the critical points are saddle points and the other one an attractor node. Regardless of their location, undoing the blow up we conclude that the original double critical point has two hyperbolic sectors and two parabolic ones. 

Observe that we have to make the blow up in the other direction ($u_2=uv,v_2=v$) to ensure that there are no orbits arriving at the critical point tangent to the vertical direction. Some simple computations show that this is not the case, and no orbit arrives tangent to the vertical axis.

$\boldsymbol{\textbf{Case 1.1.2. } \hat{g}(v_1)\textbf{ has a double root and a simple one.}}$ Here, making a similar study as in the previous case, the double point must be a saddle-node and the simple one, a saddle point. Undoing  the blow up, the original double critical point has two hyperbolic sectors and two parabolic ones, as in the previous case.

Again, no orbit arrives to the critical point tangent to the vertical axis after doing the blow up in the other direction.

$\boldsymbol{\textbf{Case 1.2. }\bar{a}_2=\bar{a}_3=0}$. This is, the only critical point is triple. We have to make the same directional blow up, getting the system
\begin{equation*}
\begin{cases}
u_1'=\frac{du_1}{ds}= -u_1(1+u_1^2)v_1^2,\\
v_1'=\frac{dv_1}{ds}=-\bar{a}_1u_1-\bar{b}_2v_1^2-\bar{b}_1u_1v_1^2+v_1^3.
\end{cases}
\end{equation*}
We study the critical points on $u_1=0,$ this is, the roots of
\[
v_1'\Big|_{u_1=0}=v_1^2(-\bar{b}_2+v_1).
\]
Here, we have two different subcases. 

$\boldsymbol{\textbf{Case 1.2.1. }\bar{b}_2\neq0}$. In this case, there is a double critical point corresponding to $v_1=0$, and a simple one for $v_1=\bar{b}_2$. The simple critical point is a saddle point while the double one is nilpotent. We follow the usual procedure with the Andreev normal form and conclude that the double point is a saddle-node. Undoing the blow up, again we get that the triple critical point has two hyperbolic sectors and two parabolic ones.

As with all these blow ups, we also have to do the other directional blow up, and, as previously, the computations show that no orbit arrives to the origin with vertical slope. 

$\boldsymbol{\textbf{Case 1.2.2. }\bar{b}_2=0}$. In this case, there is a triple critical point corresponding to $v_1=0$, making the point very degenerate. Hence, it is necessary to perform  some additional blow ups in order to desingularize the critical point. After all the process, and proceeding as in the previous cases, we conclude that the critical point is a cusp.

$\boldsymbol{\textbf{Case 2. }\bar{a}_1=0.}$ We recall that the only possibility in order to have multiple critical points is $\bar{a}_3=0, \bar{a}_2\neq0.$ Studying the double critical point in the chart $\mathcal{U}_2$ in the same way that we have done in the case 1.1.2., we get that this point has two parabolic and two hyperbolic sectors. 

Finally, as we reasoned in the previous result, the computations for the simple root zero in the chart $\mathcal{U}_1$ are equivalent to the ones already done, and the point is a cusp. 
\end{proof}

\begin{coro}\label{resumeninf}
    The infinite critical points of family \eqref{eq:campoesfera} are either cusps, or have two hyperbolic and two parabolic sectors.
\end{coro}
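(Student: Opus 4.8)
The plan is to deduce the corollary by assembling the two preceding propositions and checking that, together, they treat every possible configuration of the infinite critical points of \eqref{eq:campoesfera}. First I would recall the reduction made at the beginning of Section~\ref{Sec3}: the degenerate case $a_1=a_2=a_3=0$ is excluded, so the cubic $g(u)=-u(a_1u^2+a_2u+a_3)$ — completed, when $a_1$ or $a_2$ vanish, by the root zero of $h(u)=-(a_3u^2+a_2u+a_1)$ in the chart $\mathcal{U}_1$ — is not identically zero. Hence there is at least one infinite critical point, and every infinite critical point of \eqref{eq:campoesfera} is detected either as some $(\hat u,0)$ in $\mathcal{U}_2$ or as the zero root of $h$ in $\mathcal{U}_1$, since $\mathcal{U}_1$ and $\mathcal{U}_2$ jointly cover the whole equator.

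Next I would dichotomize according to whether all these roots are simple. If they all are — equivalently, by the first proposition, $(a_1^2+a_2^2)\,a_3\,D\neq0$ with $D=a_2^2-4a_1a_3$ — then that proposition already gives that every infinite critical point is a cusp. Otherwise at least one root is multiple, and after the linear change of variables introduced before the second proposition (a rotation followed by a translation placing the root of greatest multiplicity at the origin of $\mathcal{U}_2$, renaming the parameters with bars), the second proposition classifies each infinite critical point — the multiple ones as well as the simple ones that coexist with them — as a cusp or as a point with exactly two hyperbolic and two parabolic sectors. These two cases are mutually exclusive and exhaustive, so every infinite critical point is of one of the two announced types.

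Finally I would transfer the conclusion from the two charts to the whole equator using the antipodal symmetry of the Poincaré compactification: infinite critical points occur in antipodal pairs, and a critical point and its antipode have topologically equivalent local phase portraits, because passing to the antipode only reverses the orientation of the orbits, which changes neither a cusp nor a two-hyperbolic--two-parabolic configuration. I do not expect any genuine obstacle here: the only delicate points — that the change of variables in the second proposition loses no critical point, and that $\mathcal{U}_1,\mathcal{U}_2$ see every point of the equator — were already settled in the discussion preceding the two propositions, so the proof amounts to little more than collecting those results.
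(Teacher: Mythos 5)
Your proposal is correct and follows the same route the paper implicitly takes: the corollary is simply the union of the two preceding propositions, which together exhaust all configurations of the roots of $g$ (and of the root zero of $h$ in $\mathcal{U}_1$), with the antipodal symmetry transferring the classification to the whole equator. The paper states the corollary without further proof for exactly this reason.
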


\subsection{Centers in the sphere}

Now, let us consider the centers of the system in the sphere.

\begin{theo}\label{theo:centersphere}
If \eqref{eq:main} has a center at the origin, then it is a global center of the vector field \eqref{eq:campoesfera} on the sphere.
\end{theo}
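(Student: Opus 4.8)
The plan is to treat separately the two families of centers described in Theorem~\ref{theo:center}, since the first is integrable and the second is reversible, and in each case to prove that every orbit of \eqref{eq:campoesfera} other than the two poles, the infinite critical points, and the separatrices joining infinite critical points is a periodic orbit. I will freely use that \eqref{eq:campoesfera} is the Poincar\'e compactification of \eqref{eq:main} followed by the reparametrization by $z_3^3$, hence orbitally equivalent to \eqref{eq:main} on each open hemisphere; that \eqref{eq:campoesfera} is invariant under the antipodal map $(z_1,z_2,z_3)\mapsto(-z_1,-z_2,-z_3)$, so the south pole $S$ is a center whenever the origin of \eqref{eq:main} is; that \eqref{eq:main} is rigid, so the only finite critical points on the sphere are the two poles; and that, by Corollary~\ref{resumeninf}, every infinite critical point is a cusp or a point with two hyperbolic and two parabolic sectors, and in particular has no elliptic sector.

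\emph{Case (1): $b_1=b_2=0$.} Here $F$ is homogeneous of degree three and I would use the first integral $H$ of Theorem~\ref{theo:center}. Writing it in the coordinates $(z_1,z_2,z_3)$ via $x=z_1/z_3$, $y=z_2/z_3$ and $z_1^2+z_2^2+z_3^2=1$ gives
\[
3H=\operatorname{sgn}(z_3)\,\frac{-z_3^3+a_2z_1^3-3a_1z_1^2z_2-(2a_1+a_3)z_2^3}{(z_1^2+z_2^2)^{3/2}} .
\]
The function on the right, with the sign factor removed, is real-analytic on $\mathbb{S}^2\setminus\{N,S\}$, so by the identity principle it is a first integral of the polynomial field \eqref{eq:campoesfera} on all of $\mathbb{S}^2\setminus\{N,S\}$. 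In cylindrical coordinates $z_1=\sqrt{1-\zeta^2}\cos\theta$, $z_2=\sqrt{1-\zeta^2}\sin\theta$, $z_3=\zeta\in(-1,1)$ it becomes
\[
3H=c(\theta)-\psi(\zeta),\qquad c(\theta)=a_2\cos^3\theta-3a_1\cos^2\theta\sin\theta-(2a_1+a_3)\sin^3\theta,\qquad \psi(\zeta)=\frac{\zeta^3}{(1-\zeta^2)^{3/2}} .
\]
Since $\psi'\geq 0$ and $\psi(\zeta)\to\pm\infty$ as $\zeta\to\pm 1$, the map $\psi$ is an increasing bijection of $(-1,1)$ onto $\mathbb{R}$; hence each level set $\{H=k\}$ is the graph $\zeta=\psi^{-1}(c(\theta)-3k)$ of a smooth $2\pi$-periodic function of $\theta$, that is, a simple closed curve winding once around the cylinder $\mathbb{S}^2\setminus\{N,S\}$. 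Every orbit of \eqref{eq:campoesfera} other than $N$ and $S$ lies on such a curve, hence is either a periodic orbit (when the curve avoids the finitely many infinite critical points, which all lie on $\zeta=0$) or a polycycle through some of them; as $k\to\mp\infty$ these curves shrink onto $N$ and $S$, which fits the center behaviour there. Thus $N$ is a global center.

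\emph{Case (2): reversibility.} Now \eqref{eq:main} is reversible with respect to $\ell:\ b_1x+b_2y=0$. After a rotation --- which conjugates \eqref{eq:main} and \eqref{eq:campoesfera} to systems of the same type by a homeomorphism of the sphere, so the global-center property is unaffected --- I may assume $\ell$ is the $x$-axis, so $F$ is odd in $y$ and, in the Abel form \eqref{ec_Abel}, both $A$ and $B$ vanish at $\theta=0$ and $\theta=\pi$. Then every solution $r(\theta)$ of \eqref{ec_Abel} satisfies $r'(0)=r'(\pi)=0$, and, combining the reversibilities $\theta\mapsto-\theta$ and $\theta\mapsto 2\pi-\theta$ with uniqueness, $r(-\theta)=r(\theta)=r(2\pi-\theta)$, whence $r(\theta+2\pi)=r(\theta)$. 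So every positive solution of \eqref{ec_Abel} is either periodic or blows up in finite $\theta$: every orbit of \eqref{eq:main} is either a periodic orbit around the origin or an orbit escaping to infinity, which on the sphere means every orbit of \eqref{eq:campoesfera} in the open northern hemisphere is a periodic orbit around $N$ or reaches the equator, and similarly for the southern hemisphere and $S$. The reversibility lifts to a reflection $\sigma$ of $\mathbb{S}^2$ across the great circle $C=\overline{\ell}$ through $N$ and $S$, reversing time; hence an orbit crossing $C$ transversally at two points is a periodic orbit symmetric about $C$, and the same reflection (together with the fact that any limit cycle of the rigid system encircles the origin and so crosses $\ell$) precludes limit cycles. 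For the remaining orbits --- those reaching the equator but meeting $C$ at most once, or contained in a hemisphere bounded by $C$ --- Poincar\'e--Bendixson on $\mathbb{S}^2$ forces the $\alpha$- and $\omega$-limit sets to be critical points, periodic orbits or graphics; the poles are centers, there are no limit cycles, and the only remaining critical points are the infinite ones, which have no elliptic sector, so each such orbit is a periodic orbit or a separatrix joining infinite critical points. Hence $N$ is a global center.

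\emph{Main difficulty.} The crux is the last step of Case~(2): proving that the maximal period annuli of $N$ and $S$, together with the separatrix configuration on the equator, exhaust the sphere, i.e. that no band of non-periodic, non-separatrix orbits can stay trapped between the two annuli and the line at infinity. Making this airtight is where the absence of limit cycles (including equator-crossing ones), the reflection symmetry across $C$, and the precise local phase portraits of the infinite critical points from Corollary~\ref{resumeninf} must be combined with care; in Case~(1) the same conclusion is free from the global first integral, whose level curves were shown to wind around the sphere, so I would organize the write-up around that first integral and dispatch the reversible family by the symmetry argument above.
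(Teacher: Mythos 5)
Your proof takes essentially the same route as the paper's: for $b_1=b_2=0$ the planar first integral is extended to the sphere (the paper writes exactly the function $H(z_1,z_2,z_3)$ you obtain), and for the second family one invokes reversibility with respect to the meridian $b_1z_1+b_2z_2=0$. The additional work you supply --- the level-set analysis in cylindrical coordinates and the Abel-equation/reflection argument --- only fleshes out the paper's two-line proof, and the residual difficulty you honestly flag at the end of Case (2) (ruling out non-periodic orbits trapped between the period annuli and the equator) is not addressed in the paper's proof either.
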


\begin{proof}
By Theorem~\ref{theo:center}, the origin is a center if and only if one of the following conditions is satisfied:
\begin{enumerate}
\item $b_1=b_2=0$, 
\item 
$
3 a_1 b_2 (b_2^2-b_1^2)+
b_1 (a_2 b_1^2 + 2 a_3 b_1 b_2 - 3 a_2 b_2^2)=0$ and \\ 
$b_2(-3 a_3 b_1^2 + 2 a_2 b_1 b_2 + a_3 b_2^2)=0.
$
\end{enumerate}

Let us check that both conditions are also global centers in the sphere. 

\begin{enumerate}
\item In this case, the first integral of the planar system extends to the following first integral of the system on the sphere
\[
H(z_1,z_2,z_3) =
\frac{-z_3^3 + a_2z_1^3 - 3 a_1 z_1^2 z_2  - (2 a_1 + a_3) z_2^3}{3\sqrt{(z_1^2 + z_2^2)^3}}.
\]
It can be checked simply by differentiating $H$ along the solutions of \eqref{eq:campoesfera}. 

\item In this case the system is reversible with respect to the meridian determined by $b_1z_1+b_2z_2=0$. 
\end{enumerate}
\end{proof}

An open question is whether the converse is true, that is, if it is possible or not to have an annulus formed by a continuum of periodic solutions crossing the equator without having a global center. As we have not been able to prove or disprove it, but we conjecture that it is true, we will assume it as a hypothesis for the rest of the paper. 

\begin{hypothesis}\label{hypo}
Every annulus of periodic orbits of~\eqref{eq:campoesfera} is a global center.
\end{hypothesis}

\subsection{Geometry of the vector field on the sphere}

Consider the vector field extended to the whole sphere by \eqref{eq:campoesfera}. In this case, the system has the two poles as the only critical points outside  the equator, corresponding to the critical point at $(0,0)$ of the original system. Furthermore, the vector field on the sphere has a symmetry with respect to the origin of coordinates. 

\begin{prop}\label{prop:sym}
Assume that  $t\to(z_1(t),z_2(t),z_3(t))$ is a solution of ~\eqref{eq:campoesfera} for certain functions $z_1,z_2,z_3$. Then $t\to -(z_1(t),z_2(t),z_3(t))$ is also a solution.
\end{prop}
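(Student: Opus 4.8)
The plan is to verify the symmetry directly from the polynomial structure of the vector field. Write the right-hand side of \eqref{eq:campoesfera} as $Z' = V(Z)$ with $Z=(z_1,z_2,z_3)$ and $V=(V_1,V_2,V_3)$. The claim that $t\mapsto -Z(t)$ is again a solution is equivalent to the identity $V(-Z) = -V(Z)$, i.e. that $V$ is an odd map. So the whole proof reduces to inspecting the degrees of the monomials appearing in each component $V_i$.

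First I would observe that each monomial in $V_1$ and $V_2$ has total degree (in $z_1,z_2,z_3$ jointly) equal to $4$: indeed $z_3\cdot(z_2z_3)$, $z_3\cdot(z_1^2z_3^2)$, $z_3\cdot(z_1z_2z_3^2)$, $z_3\cdot z_1^4$, etc., all multiply out to degree $4$. Hence $V_1(-Z) = (-1)^4 V_1(Z)\cdot(-1)^{?}$ — more precisely, replacing $Z$ by $-Z$ multiplies a degree-$d$ monomial by $(-1)^d$, so $V_1(-Z)=V_1(Z)$, which is \emph{even}, not odd. That looks like it breaks the claim, so the second thing I would do is recheck: in fact each monomial of $V_1$ is degree $5$ once one counts carefully (e.g. $z_3\cdot z_2 z_3 = z_2z_3^2$ is degree $3$, $z_3\cdot z_1^2z_3^2=z_1^2z_3^3$ is degree $5$, $z_3 \cdot z_1^4 = z_1^4 z_3$ is degree $5$) — so the components are \emph{not} homogeneous. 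The correct bookkeeping is that every monomial of $V_1,V_2$ has \emph{odd} total degree, and every monomial of $V_3$ also has odd total degree: $z_3^2\cdot(b_1z_1z_3^2)$ is degree $5$, $z_3^2\cdot a_1z_1^3$ is degree $5$, and $(-1)\cdot(b_1z_1z_3^2)$ is degree $3$, $(-1)\cdot a_1 z_1^3$ is degree $3$, all odd. Since a monomial of odd degree $d$ satisfies $m(-Z) = (-1)^d m(Z) = -m(Z)$, summing over the monomials gives $V(-Z) = -V(Z)$ componentwise.

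The key step is therefore purely combinatorial: group the terms of \eqref{eq:campoesfera} and check that after full expansion every monomial has odd total degree in $(z_1,z_2,z_3)$; then the map $V$ is odd and the symmetry follows immediately, because if $Z(t)$ solves $Z'=V(Z)$ then $\frac{d}{dt}(-Z(t)) = -Z'(t) = -V(Z(t)) = V(-Z(t))$, so $-Z(t)$ solves the same system. I would present this as: each of the three right-hand sides is a sum of monomials of odd degree; conclude $V(-Z)=-V(Z)$; then do the two-line chain-rule computation above.

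The main obstacle — really the only place to slip — is the degree bookkeeping, since the components of $V$ are genuinely inhomogeneous (degrees $3$ and $5$ both occur), so one must be careful to confirm that \emph{no} even-degree monomial sneaks in; in particular one should double-check the $z_3'$ equation, where the factor $(z_3^2-1)$ contributes both a degree-$2$ and a degree-$0$ piece multiplying a cubic, keeping everything odd. Once that is checked the statement is essentially immediate, so I would keep the proof to a few lines: state the oddness of $V$, justify it by the degree count, and finish with the chain rule.
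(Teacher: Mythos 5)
Your proof is correct and matches the paper's approach: the paper simply states that the result ``follows by direct computation,'' and your degree count (every monomial of each component of the vector field has odd total degree, hence $V(-Z)=-V(Z)$, and the chain rule finishes) is exactly that computation spelled out. Just delete the false start about degree~$4$ before writing it up, since the final bookkeeping is the right one.
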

\begin{proof}
The proof follows by direct computation.
\end{proof}

As a consequence, both poles have the same local phase portrait. Recall that the origin of the original rigid system is always monodromic, so both poles are also monodromic, and by the symmetry, with the same stability and opposite orientation.

A second consideration is that solutions intersect the equator, $Q$, orthogonally.

\begin{prop}
The vector field is orthogonal to any regular point on the equator. Moreover, if $(z_1,z_2,0)$ is a point on the equator, the direction of the vector field is determined by the sign of $a_1z_1^3+a_2z_1^2z_2+a_3z_1z_2^2$.
\end{prop}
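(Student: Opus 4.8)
The plan is to verify the two claims by a direct computation in the coordinates $(z_1,z_2,z_3)$ used in \eqref{eq:campoesfera}, evaluated on the equator $Q=\{z_3=0\}$. First I would substitute $z_3=0$ into \eqref{eq:campoesfera}. The first two components $z_1'$ and $z_2'$ each carry an explicit factor $z_3$, so both vanish identically on $Q$; only the third component survives, giving
\[
(z_1',z_2',z_3')\big|_{z_3=0} = \bigl(0,\,0,\,-\bigl(a_1 z_1^3+a_2 z_1^2 z_2+a_3 z_1 z_2^2\bigr)\bigr).
\]
Thus on the equator the vector field points in the $\pm e_3$ direction. Since $Q$ is the intersection of the unit sphere $\mathbb S^2$ with the plane $z_3=0$, its tangent space at a point $p=(z_1,z_2,0)\in Q$, as a curve inside the sphere, is spanned by $(-z_2,z_1,0)$, while the outward normal to $\mathbb S^2$ at $p$ is $p=(z_1,z_2,0)$ itself. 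The vector $(0,0,-c)$ with $c=a_1 z_1^3+a_2 z_1^2 z_2+a_3 z_1 z_2^2$ is orthogonal to both of these, hence it is orthogonal to the equator and, in fact, tangent to $\mathbb S^2$ at $p$ (consistent with the flow staying on the sphere). This establishes the first assertion at every point where $c\neq 0$, i.e. at every regular point of the field on $Q$.

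For the second assertion I would simply read off the sign: the $z_3$-component at $p$ equals $-c$, so the field points from the northern hemisphere toward the southern one exactly when $c=a_1 z_1^3+a_2 z_1^2 z_2+a_3 z_1 z_2^2>0$, and from south to north when $c<0$; when $c=0$ the point is a critical point of \eqref{eq:campoesfera} on the equator, which is precisely the case $\hat u$ a root of $g$ discussed in the previous subsection. I should also note, for consistency, that after the reparametrization of time by $z_3^3$ used to pass from the projected field to \eqref{eq:campoesfera}, the factor $z_3^3$ changes sign across the equator, so the \emph{physical} orientation of crossing in the northern and southern hemispheres is reversed relative to what $-c$ alone indicates; but since the statement concerns the vector field \eqref{eq:campoesfera} as written, the sign of $a_1z_1^3+a_2z_1^2z_2+a_3z_1z_2^2$ is exactly what controls the direction.

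There is essentially no obstacle here: the result is a one-line substitution plus an elementary linear-algebra observation about tangent and normal directions on $\mathbb S^2$. The only point requiring a modicum of care is the identification of the "regular point" hypothesis with $c\neq0$ and the bookkeeping of the time rescaling mentioned above, so that the statement is not misread as describing the trajectories' orientation in the original planar flow. Accordingly I would keep the proof to a couple of sentences, stating the restriction of \eqref{eq:campoesfera} to $z_3=0$ and remarking that $(0,0,-c)$ is normal to the equator viewed as a submanifold of the sphere.
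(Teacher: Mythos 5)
Your proposal is correct and follows the same route as the paper: restrict \eqref{eq:campoesfera} to $z_3=0$, observe that the first two components vanish because of the explicit factor $z_3$, and read off the third component. In fact you are slightly more careful than the paper's own proof, which writes $z_3'=a_1z_1^3+a_2z_1^2z_2+a_3z_1z_2^2$ on the equator and silently drops the factor $(z_3^2-1)=-1$; since the proposition only speaks of the direction being \emph{determined by} the sign of that cubic, the discrepancy is harmless.
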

\begin{proof}
It suffices to note that the system~\eqref{eq:campoesfera} at any point on the equator is 
\[
\begin{cases}
z_1'&=0,\\
z_2'&=0,\\
z_3'&=a_1z_1^3+a_2z_1^2z_2+a_3z_1z_2^2.
\end{cases}
\]
\end{proof}

We remind that, as seen in Section 3.2, the system on the equator either has two cusps; six cusps; two critical points with two hyperbolic and two parabolic sectors; or two cusps and two critical points with two hyperbolic and two parabolic sectors.

In the previous section we have studied a subfamily of system \eqref{eq:main} which has no finite limit cycles. Nevertheless, it turns out that when the system has either two or six cusps (and no other critical points) on the equator, the system in the Poincar\'e sphere, system \eqref{eq:campoesfera}, has either a periodic solution or a homoclinic or heteroclinic connection. We divide the result in two cases, corresponding with the number of cusps, two or six. 

\begin{theo}\label{teo-po-esfera}
If system \eqref{eq:campoesfera} has two cusps and no other critical points on the equator, then it always has a periodic solution in the sphere,  symmetric with respect to the origin and its intersection with the equator consists of two (symmetric) regular points.

\end{theo}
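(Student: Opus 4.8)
The plan is to work directly with the vector field \eqref{eq:campoesfera} on the sphere and exploit the three structural facts already established: the two poles are the only critical points off the equator and each is monodromic (Proposition \ref{prop:sym} and the discussion after it); the flow crosses the equator orthogonally at regular points, with the sign of $a_1z_1^3+a_2z_1^2z_2+a_3z_1z_2^2$ governing the direction; and, in the two-cusp case, the equator carries exactly two cusps and no other critical points. First I would set up a return map. Take a pole, say the north pole $N$ (image of the origin), which is monodromic, hence either a focus/center or surrounded by orbits spiralling; pick a small transversal segment $\Sigma$ issuing from $N$ toward the equator and follow the flow. Because $N$ is the only critical point in the open northern cap, a trajectory leaving a neighborhood of $N$ must either return to $\Sigma$ (after one loop around $N$), or reach the equator $Q$. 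Since the equator is crossed orthogonally at regular points, any trajectory hitting $Q$ at a regular point passes transversally into the southern cap; there, by the antipodal symmetry of Proposition \ref{prop:sym}, the dynamics is a mirrored copy, so the trajectory is forced back to $Q$ and then back into the northern cap. This lets me define a Poincaré return map $\Pi$ on $\Sigma$ (or on a suitable arc of $Q$) whose fixed points are exactly the periodic orbits I am after.

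The key step is to show $\Pi$ has a fixed point that is \emph{not} the pole, i.e. to rule out the alternative that all orbits spiral from one pole to the other. Here is where I would use the cusp structure. A cusp on the equator, by its local phase portrait, has a single pair of separatrices and is approached/left by orbits from one side only; crucially, a cusp does not absorb an open set of orbits the way a node or a parabolic sector does, nor does it emit/receive orbits filling a sector — the local picture at a cusp is one elliptic-type tongue. The two cusps are antipodal (critical points come in symmetric pairs), and the sign of the cubic $a_1z_1^3+a_2z_1^2z_2+a_3z_1z_2^2$ on $Q$ changes exactly at the cusps (these are its zeros on the circle $z_1^2+z_2^2=1$, counted with the doubled multiplicity); so the equator splits into two open arcs, on one of which the flow points from south to north and on the other from north to south. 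Consider the set $W$ of points on the northern transversal whose forward orbit reaches $Q$ through the "north-to-south" arc; $W$ is open, and by the symmetry argument its "return" is governed by the "south-to-north" arc. I would argue that $W$ is nonempty (orbits near the cusp separatrices) and that its complement within the orbits leaving $N$ is also nonempty, so that $\partial W$ consists of orbits that can neither escape nor return generically — the standard trichotomy (an orbit, a homoclinic/heteroclinic connection through a cusp, or a periodic orbit) then applies, and the hypothesis that there are \emph{no} equilibria other than the two cusps and the two poles forces, on $\partial W$ or by a continuity/intermediate-value argument on $\Pi$, a genuine periodic orbit. Alternatively, and perhaps more cleanly: orient the return map and note $\Pi$ is an orientation-preserving homeomorphism of an interval; if it had no interior fixed point, every orbit would be heteroclinic between the poles, but then the $\omega$-limit on the equator would have to be a cusp, and the cusp's local structure is incompatible with receiving a spiralling family from an open set of the cap — contradiction.

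Then I would promote the fixed point to the asserted symmetry statement: if $\gamma$ is the periodic orbit produced, then $-\gamma$ is also a periodic orbit by Proposition \ref{prop:sym}; if $\gamma\neq-\gamma$ one still gets a symmetric periodic orbit by a minimality/uniqueness argument (take the innermost such orbit, which must be symmetric because its antipode is another orbit of the same type), or one shows directly the fixed point can be taken on the symmetry locus. Finally, since $\gamma$ cannot pass through a cusp (cusps lie on the equator and a periodic orbit through an equilibrium is excluded) and must cross $Q$ — it cannot stay in one open cap because each cap has the pole as its only equilibrium and $\gamma$ cannot encircle it without being forced by the orthogonal crossing to leave — $\gamma$ meets $Q$ in regular points, and by symmetry in an even number of them; transversality of the crossing plus the two-arc sign structure pins this number at exactly two.

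The main obstacle I anticipate is the careful local analysis at the cusps: I need that a cusp on $Q$ cannot serve as the $\omega$- (or $\alpha$-) limit of an open family of orbits coming from a pole, and that the two cusp separatrices, lying in the equator's complement, organize the transition maps the way the sign-change argument suggests. Making the trichotomy (orbit / connection / periodic orbit) rigorous — essentially a Poincaré–Bendixson argument on the sphere with the equilibria being the two poles and two cusps — and excluding the connection case (which is handled in the companion statement, so here one may simply phrase the conclusion as a periodic orbit \emph{or} a connection and then argue the connection is non-generic / excluded by the "no other critical points" hypothesis only for the periodic alternative) is the delicate part; everything else is the bookkeeping of symmetry and transversal crossings.
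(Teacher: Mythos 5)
Your general toolkit (Poincar\'e--Bendixson on the sphere, the cusp classification, a fixed-point argument, the antipodal symmetry) is the right one, but the three steps that carry the proof are not established. The first and most serious gap is the assertion that a trajectory entering the southern cap is ``forced back to $Q$'' by the antipodal symmetry. Proposition~\ref{prop:sym} only says that the antipodal image of a solution is a solution; it does not prevent a trajectory from spiralling into the pole of the cap it has entered, or from converging to a limit cycle contained in the open cap (the paper explicitly allows such limit cycles elsewhere), and since both poles have the same stability one cannot even hope that \emph{every} solution leaving $Q$ returns to it. The actual key lemma is weaker but sufficient: \emph{all} solutions starting on one of the two arcs $Q^{+}$, $Q^{-}$ into which the two cusps divide the equator return to $Q$. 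Its proof is a trapping-region argument you never set up: a non-returning orbit from one arc, together with $Q$ and a meridian, bounds a region containing no critical points except a cusp on its boundary, so a backward orbit issued from the other arc would be trapped there with the cusp in its $\alpha$-limit, forcing a nodal/parabolic sector at the cusp and contradicting Corollary~\ref{resumeninf}. Your ``cleaner'' alternative does not repair this: if every orbit were pole-to-pole heteroclinic its $\omega$-limit would be a pole, not a cusp, so no contradiction with the cusps' local structure arises; and the absence of an interior fixed point of a return map does not force all orbits to be such heteroclinics in the first place.

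The second gap is the symmetrization. Having produced some periodic orbit $\gamma$, you cannot upgrade it to a symmetric one by taking ``the innermost'' (such an orbit need not exist, and if it does there is no reason it coincides with its antipode), and the antipodal map of $S^{2}$ has no fixed locus on which to place the fixed point. The symmetry must be built into the fixed-point argument from the start: using the return property one gets a continuous transition map $\overline{Q^{+}}\to\overline{Q^{-}}$, composes it with the antipodal map to obtain a self-map of $\overline{Q^{+}}$, and applies Brouwer to find $p$ whose orbit arc ends at $-p$; by Proposition~\ref{prop:sym} the antipodal image of that arc is the continuation of the same orbit, which is therefore closed, symmetric, and meets $Q$ in exactly the two regular points $\pm p$. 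This construction also settles your last step, since for an arbitrary periodic orbit the two-arc sign structure only forces an even number of equatorial crossings, not two.
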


\begin{proof} Consider the vector field \eqref{eq:campoesfera} defined on the sphere and denote the equator, $z_3=0$, by $Q.$ A meridian is a great circle joining the two poles, $(0,0,1)$ and $(0,0,-1)$. Note that the vector field is transversal to the meridians except at the poles and the equator. Moreover, the solutions rotate in both hemispheres clockwise.

If the vector field has two simple critical points, then the vector field has two changes of direction along the equator. Without loss of generality, we can assume that these changes are at $(1,0,0)$ and $(-1,0,0).$
We will use the following notation:
\[
Q^+=\{(z_1,z_2,0)\in Q,\colon z_2>0\},
\quad Q^-=\{(z_1,z_2,0)\in Q,\colon z_2<0\}.
\]

\medskip 

{\bf Claim. All solutions starting in $Q^+$ or all solutions starting in $Q^-$ intersect again $Q$.}
Assume that not all solutions starting in $Q^+$ intersect $Q$. Then there exists $p\in Q^+$ such that if $u(t)$ is the solution of \eqref{eq:campoesfera} with initial condition $u(0)=p$, then $u(t)\not\in Q$ for $t>0$. 

Consider any point $q\in Q^-$, and the solution $v(t)$ starting in $q$.  For negative time, if $v(t)$ does not cross the equator, then it is contained in the region $R$ limited by $Q$, $u(t)$, and the meridian passing by $q$.  Moreover, its $\alpha$-limit set must contain a critical point, as in the interior of $R$ there are neither critical points nor limit cycles. 

\begin{figure}[h]
\includegraphics[scale=.25]{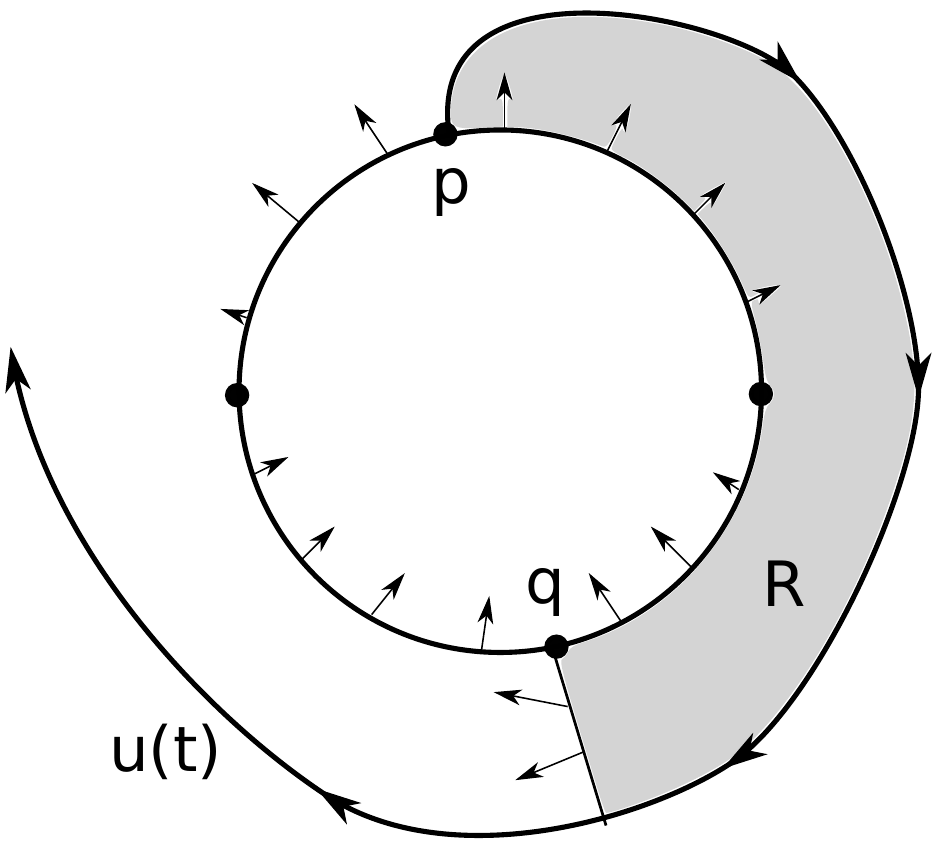}
\caption{Region $R$ in stereographic projection.}
\end{figure}

The unique critical point in $R$ is $(1,0,0)$, but if if belongs to the $\alpha$-limit set, then $v(t)$ and $Q^-$ limit a negative invariant region, so $(1,0,0)$ has a nodal sector, in contradiction with Corollary~\ref{resumeninf}. Therefore, all solutions starting in $Q^-$ intersect $Q^+$ and the claim follows. 

Therefore, we have a map from $Q^+$ to $Q^-$ (or the reverse). Composing this map with the symmetry with respect to the center of the sphere, we have a map from $Q^+$ to $Q^+$. By Brower's fixed point theorem, this map has a fixed point which is a solution crossing $Q$ at symmetric points. By Proposition~\ref{prop:sym}, we conclude this solution is periodic. 

\end{proof}

The previous result has been proved under the hypothesis that the system \eqref{eq:campoesfera} has exclusively two cusps on the equator. In the following result we will prove that when having six cusps on the equator the system also has a periodic solution, provided that there are not homoclinic nor heteroclinic connections.

\begin{theo}\label{teo-po-esfera2}
If system \eqref{eq:campoesfera} has six cusps on the equator and there are no homoclinic nor heteroclinic connections, then it always has a periodic solution in the sphere,  symmetric with respect to the origin and its intersection with the equator consists of two or six (symmetric) regular points.
\end{theo}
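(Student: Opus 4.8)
The plan is to adapt the proof of Theorem~\ref{teo-po-esfera} to the case of six cusps. As a preliminary remark, if the origin of~\eqref{eq:main} is a center then Theorem~\ref{theo:centersphere} already gives a global center on the sphere (and a global center with cusps on the equator $Q$ would need separatrix polycycles joining cusps, i.e.\ heteroclinic connections, so this case is excluded by hypothesis anyway); hence we may assume the poles are foci. On $Q$ the field reduces to $z_1'=z_2'=0$, $z_3'=a_1z_1^3+a_2z_1^2z_2+a_3z_1z_2^2$, and since this cubic form is odd under the antipodal map $\sigma(z)=-z$, the six cusps divide $Q$ into six open arcs, alternately \emph{entering} arcs $S_1,S_2,S_3$ (where $z_3'>0$) and \emph{leaving} arcs $T_1,T_2,T_3$ (where $z_3'<0$), in the cyclic order $S_1,T_1,S_2,T_2,S_3,T_3$, with $\sigma$ interchanging the two families. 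As in Theorem~\ref{teo-po-esfera}, the field is transversal to every meridian away from the poles and $Q$, and the solutions wind in a fixed sense in each hemisphere.

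The core step is to construct a return map. Repeating the Claim in the proof of Theorem~\ref{teo-po-esfera}: if a solution entered the northern hemisphere through some $S_i$ and never returned to $Q$, then, together with a meridian and arcs of $Q$, it would bound a region $R$ with no limit cycle and no critical point other than cusps in its interior; a solution issuing from a leaving arc, followed backward, would then be trapped in $R$, forcing its $\alpha$-limit to be a cusp with a nodal or elliptic sector, contradicting Corollary~\ref{resumeninf}, or a cusp-to-cusp graphic, contradicting the no-connections hypothesis. Hence we obtain a continuous injection $P\colon S_1\cup S_2\cup S_3\to T_1\cup T_2\cup T_3$; by Proposition~\ref{prop:sym} the return map of the southern hemisphere is $\sigma\circ P\circ\sigma$. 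Connectedness of each $S_i$ forces $P(S_i)$ to lie in a single leaving arc, determined by the winding sense and the cyclic order, and $P$ extends continuously to the cusps, mapping cusps to cusps (a cusp separatrix cannot reach another cusp, by hypothesis).

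Finally, composing $P$, $\sigma\circ P\circ\sigma$ and $\sigma$ an appropriate number of times gives a continuous self-map of a closed arc $\overline{S_i}$ of $Q$; by the intermediate value theorem it has a fixed point, which by Proposition~\ref{prop:sym} is a periodic orbit invariant under $\sigma$. It cannot be a cusp, since a cusp so fixed would start a cusp-to-cusp heteroclinic chain, excluded by hypothesis. Following the orbit through both hemispheres via Proposition~\ref{prop:sym} shows that it closes up and meets $Q$ at symmetric regular points, their number being two or six according to how many of the six arcs it traverses before returning ($\sigma\circ P$ can fix an arc for certain combinatorics of $P$, giving two, and otherwise it cyclically permutes $S_1,S_2,S_3$ and a sixfold composition is needed, giving six). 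I expect the main obstacle to be showing that $P$ is everywhere defined: with the poles only foci, a solution could a priori spiral onto a limit cycle around a pole rather than returning to $Q$, so beyond Corollary~\ref{resumeninf} one must use the hypothesis of no homoclinic or heteroclinic connections (which rules out, in particular, a pole-to-pole heteroclinic, the $\alpha$-limit configuration hardest to exclude), the monodromy of the poles, and a Poincar\'e--Bendixson analysis on the sphere; the continuous extension of $P$ to the cusps and the verification that the fixed point is regular are the remaining delicate points.
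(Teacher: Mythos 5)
Your overall scheme (a transition map on the equatorial arcs, conjugated by the antipodal symmetry, closed by a fixed-point argument) is the same skeleton as the paper's, but the step you defer as "the main obstacle" is in fact the entire content of the paper's proof, and the way you propose to fill it does not work. The assertion that one obtains a continuous injection $P\colon S_1\cup S_2\cup S_3\to T_1\cup T_2\cup T_3$ with each $S_i$ landing in a single leaving arc fails on two counts. First, the trapped-region argument from Theorem~\ref{teo-po-esfera} only yields a disjunction (\emph{all} solutions from one family of arcs return, \emph{or} all from the other do); it does not make $P$ everywhere defined, since a solution entering a hemisphere may spiral towards the monodromic pole or onto a limit cycle surrounding it, and with six cusps the bounded region can contain several cusps and leaving arcs, so the "forced parabolic sector" contradiction no longer falls out by merely repeating the two-cusp claim. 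Second, and more fundamentally, even where $P$ is defined it is not continuous across the points where a stable separatrix $s_j$ of a cusp meets an entering arc: such a point has no image in a leaving arc (its forward orbit dies in the cusp), and the two sub-arcs it separates are generally sent to \emph{different} leaving arcs. Hence "connectedness of $S_i$ forces $P(S_i)$ to lie in a single leaving arc" is false as stated, and with it collapses the claim that $\sigma\circ P$ permutes the arcs and that an iterate of it is a self-map of some $\overline{S_i}$ — which is exactly what your Brouwer/intermediate-value step needs.

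The paper's proof supplies precisely what is missing here: its Claim~1 is a case-by-case tracking of the unstable and stable varieties $u_1,s_3,u_4,u_5$ through the six sectors, using the symmetry of Proposition~\ref{prop:sym} and the crossing directions, to show that some $u_i$ (or $s_i$, after time reversal) reaches the sector \emph{antipodal} to its starting one in the first turn; its Claim~2 then builds the transition map only on the sub-arc of $Q_1$ cut out by the relevant separatrix intersections ($c_1\in Q_4$, $c_4\in Q_1$), extends it continuously across the single crossing of $s_3$ with $Q_1$, and closes the argument either by an intermediate-value argument on the angle subtended by $p$ and $\phi(p)$ or by Brouwer after composing with the antipodal map. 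Without an analogue of Claim~1 you cannot guarantee that any arc is mapped into its antipode (it could a priori be mapped into an adjacent leaving arc for every $i$ without the cyclic-permutation structure you invoke), and without the sub-arc construction of Claim~2 your map is neither globally defined nor continuous. So the proposal identifies the right target but leaves the decisive combinatorial lemma unproved.
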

\begin{proof}
Consider the vector field \eqref{eq:campoesfera}. In this case, we assume that the infinite critical points, so changes of directions of the vector field at the equator $Q$, are at clockwise-ordered points $p_1$, $p_2$, $p_3$, $p_4$, $p_5$, $p_6$, with $p_1=(-1,0,0)$ (see Figure~\ref{fig:3}). 
Moreover, by Corollary~\ref{resumeninf}, they are cusps. Note that the stable and unstable varieties of consecutive cusps must be in opposite hemispheres. Denote $u_i$ to the unstable variety of $p_i$ and $s_i$ to the stable variety. 

\begin{figure}[h]
\includegraphics[scale=.45]{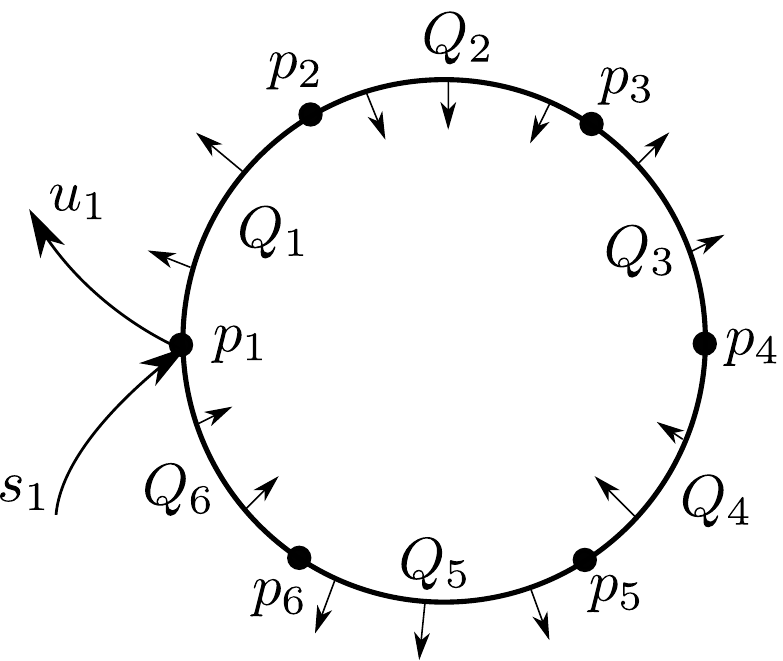}
\caption{Critical points and sectors in stereographic projection.}\label{fig:3}
\end{figure}

We divide the equator in sectors $Q_1$, $Q_2$, $Q_3$, $Q_4$, $Q_5$, $Q_6$, in clockwise order, where $Q_1$ is limited by $p_1$ and $p_2$, and so on. 

Note that the points $p_i,p_j$ are symmetric if $|i-j|=3$, and the same holds for $Q_i,Q_j$. Moreover, as we have a cyclic ordering, we will work with all the indices in $\mathbb{Z}/6\mathbb{Z}$.

As the solutions of the vector field always rotate clockwise, we will consider the intersections of $u_i,s_i$ with $Q$ in the first turn around the center. Note that by the directions of the vector field, $u_i$ can not cut $Q_i$ in its first turn and $s_i$ can not cut $Q_{i-1}$ in its first turn.

\medskip 

{\bf Claim 1. There exists $i\in\{1,\ldots,6\}$ such that either $u_i$ cuts the equator in the first turn in the sector symmetric to $Q_i$, or $s_i$ cuts the equator in the first turn in the sector symmetric to $Q_{i-1}$.}

To simplify the exposition, we assume the directions of the vector field are those shown in Figure~\ref{fig:3}. We call the northern hemisphere to be the bounded one in the figure, and southern to the unbounded. 

We will divide the proof of this claim in several cases depending on the intersections of the stable and unstable varieties of the critical points at the equator. Recall that $u_1$ does not intersect $Q_1$ and if $u_1$ intersects $Q_2$, it will be its first intersection with $Q$. Moreover, if $u_1$ intersects $Q_4$ in the first turn, the claim follows, so we will not consider the possibility in which the first intersection of $u_1$ with $Q$ is at $Q_4$. Finally, also recall that because of the direction of the vector field, the first intersection of $u_1$ with $Q$ can not be at $Q_3$ or $Q_5$. Consequently, the following possibilities remain:
\begin{enumerate}
\item $u_1$ intersects $Q_2$, but then it does not intersect $Q_3$ in the first turn around the center.

\item $u_1$ intersects $Q_2$ and $Q_3$ in the first turn around the center.

\item $u_1$ does not intersect $Q_2$ in the first turn around the center.
\end{enumerate}

\medskip 

{\bf Case 1. $u_1$ intersects $Q_2$, but then it does not intersect $Q_3$ in the first turn around the center.}
The variety $u_1$ is in the southern hemisphere from $p_1$ to its first intersection with $Q_2$, and does not intersect $Q_3$ by hypothesis and $Q_4$ because of the direction of the vector field, remaining in the northern hemisphere. By Proposition~\ref{prop:sym}, $u_4$ is symmetric to $u_1$, so it remains in the northern hemisphere until it cuts $Q_5$, and then it can not intersect $Q_6$ or $Q_1$, staying in the southern hemisphere (see Figure~\ref{fig:4}). 

\begin{figure}[h]
\includegraphics[scale=.45]{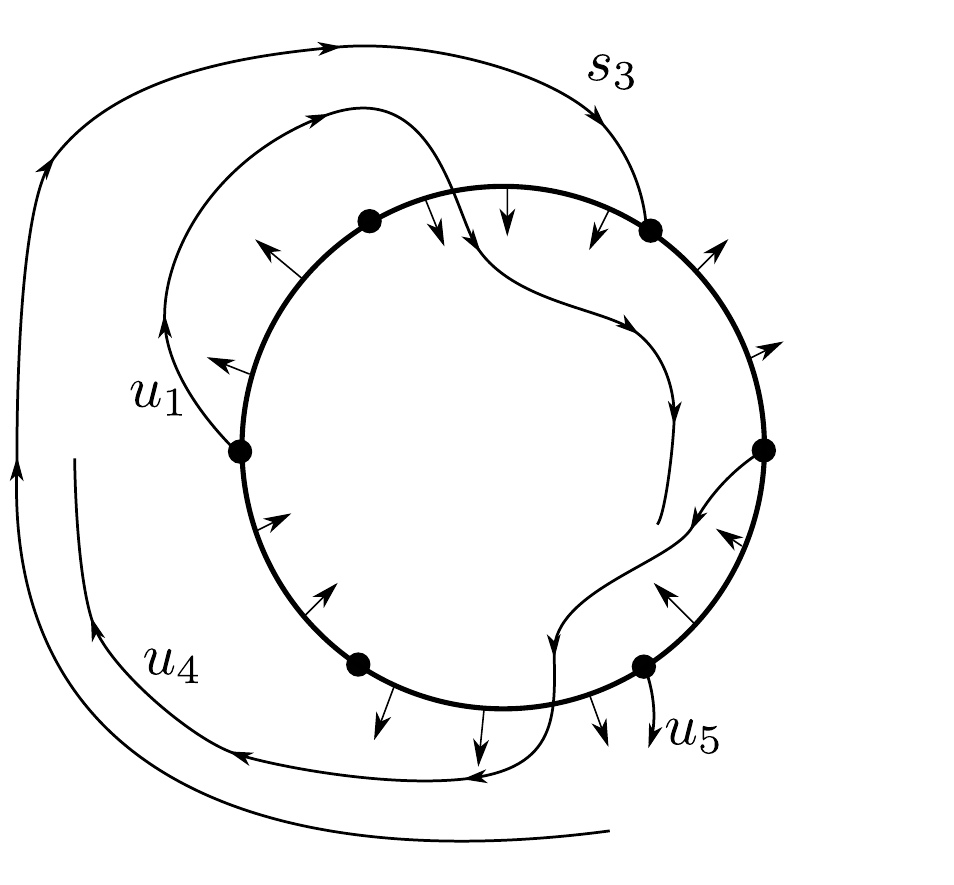}
\caption{Relevant cusp varieties in Case 1 in stereographic projection.}\label{fig:4}
\end{figure}

On the other hand, the variety $s_3$, in its first turn, can not cut the equator in the sectors $Q_2$, $Q_1$, $Q_6$. If it cuts the equator in $Q_5$ we prove the claim. If it does not cut $Q_5$, then $u_5$ is bounded by $s_3,$ $u_1$ and $u_4$, so it must cut $Q_2$, and the claim follows, see again Figure~\ref{fig:4}.

\medskip

{\bf Case 2. $u_1$ intersects $Q_2$ and then it intersects $Q_3$ in the first turn around the center.}
As in the previous case, the variety $u_1$ is in the southern hemisphere from $p_1$ to its first intersection with $Q_2$, and then comes back to the southern hemisphere when it intersects $Q_3$. In this situation, its symmetric variety $u_4$ remains in the northern hemisphere until it intersects $Q_5$ and then returns to that hemisphere at $Q_6$ (see figure~\ref{fig:2}.)

\begin{figure}[h]
\includegraphics[scale=.55]{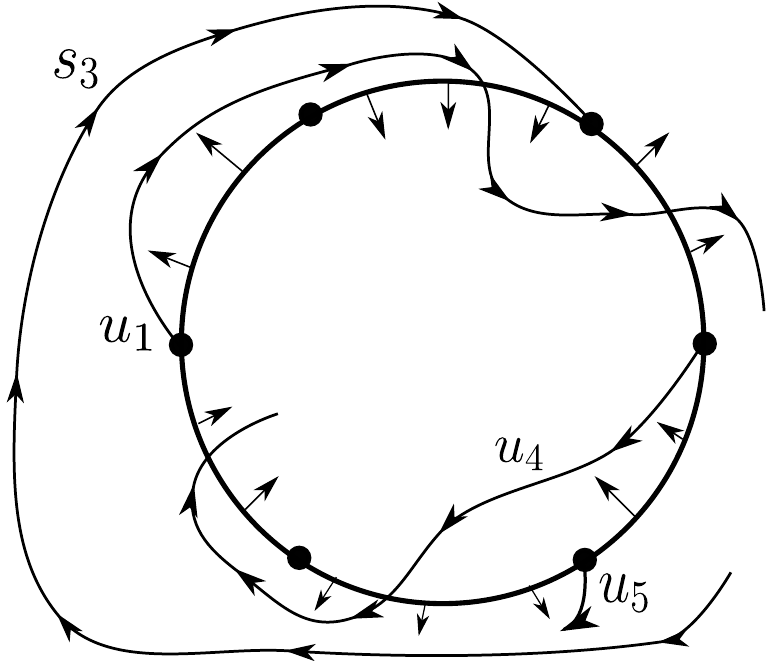}
\caption{Relevant cusp varieties in Case 2 in stereographic projection.}\label{fig:2}
\end{figure}

On the other hand, the variety $s_3$ starts in the southern hemisphere and can not intersect $Q_1$ because of the position of $u_1$, or $Q_2,Q_6$ because of the direction of the vector field. If it intersects $Q_5$, then the claim is proved, so we will assume it does not intersect $Q_5$. In this situation, we have three possibilities depending on the behaviour of $u_5$ (see Figure~\ref{fig:8}).
The variety $u_5$ starts in the southern hemisphere and can not intersect $Q_5$. There are the following possibilities:

\begin{enumerate}
    \item If $u_5$ does not intersect $Q_6$, then, because of the positions of $u_1$ and $s_3$ it must intersect $Q_2$, and the claim follows. 
    \item If $u_5$ intersects $Q_6$, and then it intersects $Q_1$, then because of the position of $u_1$ it must intersect $Q_2$, and the claim follows. 
    \item If $u_5$ intersects $Q_6$, and then it does not intersect $Q_1$, $u_5$ is in the already-proven Case 1 (relabeling it as $u_1$). 
\end{enumerate}

\begin{figure}[h]
\includegraphics[scale=.30]{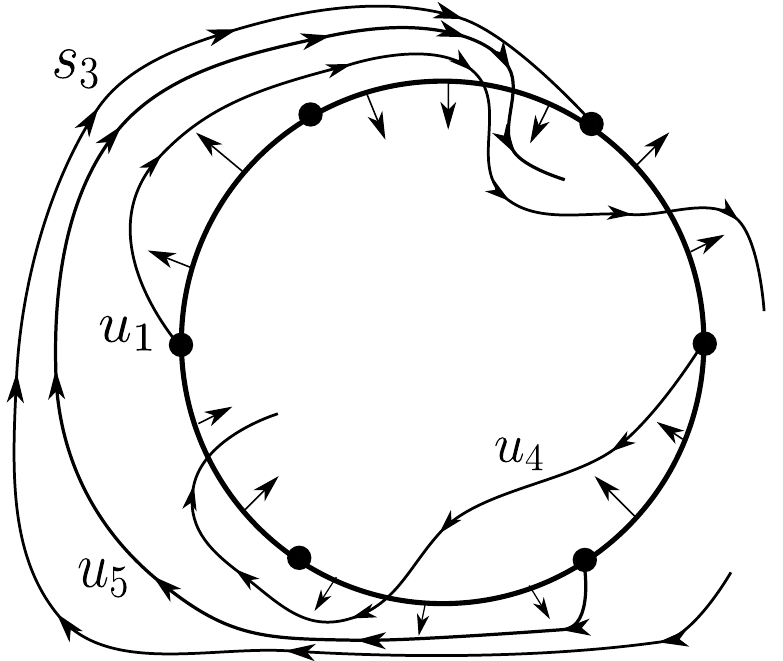}
\includegraphics[scale=.30]{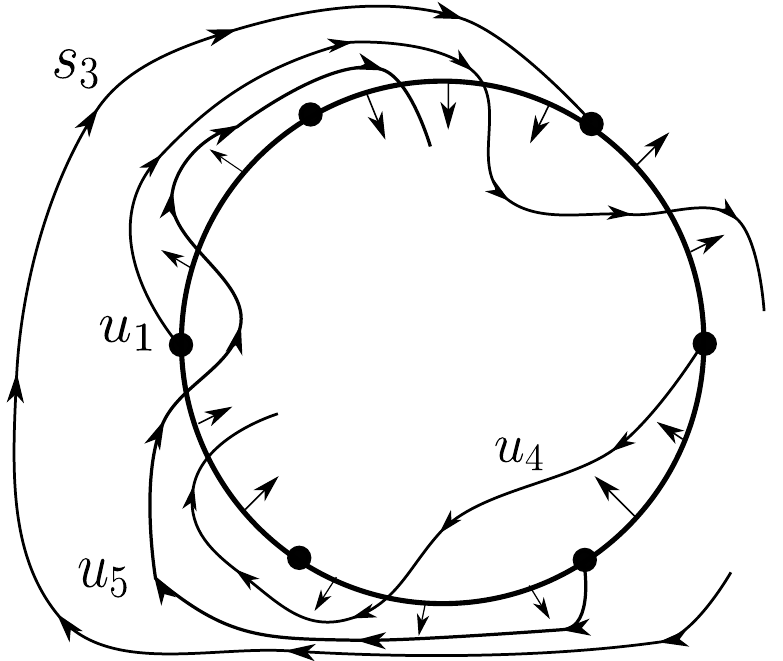}
\includegraphics[scale=.30]{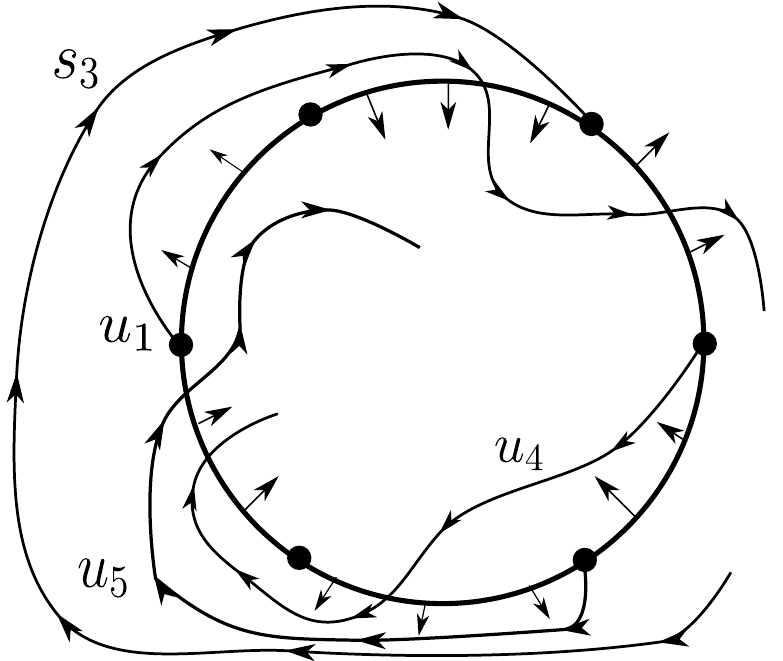}
\caption{Possibilities in Case 2 regarding the behaviour of $u_5$}\label{fig:8}
\end{figure}

\medskip 

{\bf Case 3. $u_1$ does not intersect $Q_2$ in the first turn around the center.}

In this case, the variety $u_1$ does not intersect $Q_2$ or $Q_4$ by hypothesis, nor $Q_1$, $Q_3$ or $Q_5$ because of the direction of the vector field.

But then $s_3$, that can not intersect $Q_2$ by the direction of the vector field, must intersect $Q_1$ because it is bounded by $u_1$. Consequently, reversing time, $s_3$ falls in Case 1 or Case 2, and the claim follows (see Figure~\ref{fig:9}). 

\begin{figure}[h]
\includegraphics[scale=.55]{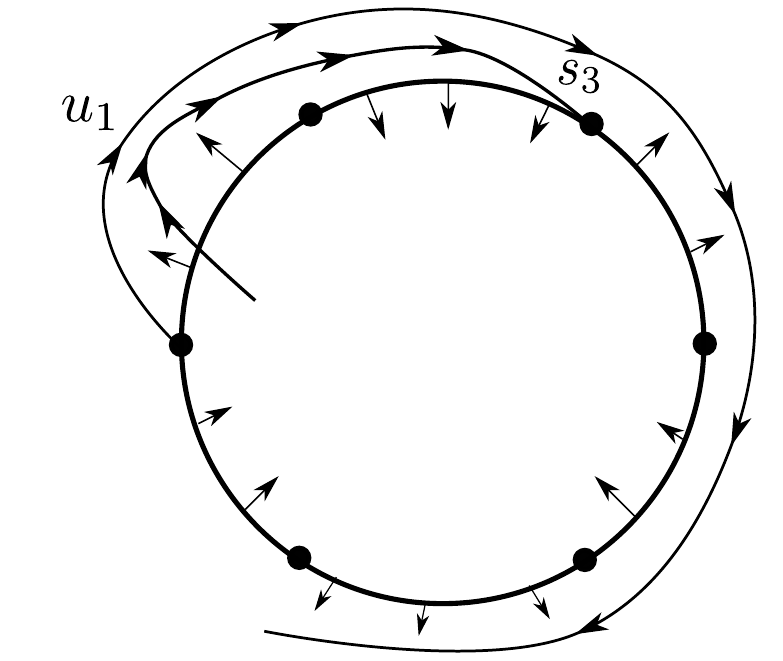}
\caption{Relevant cusp varieties in Case 3 in stereographic projection.}\label{fig:9}
\end{figure}

\medskip

We have proved Claim 1, and now we can proceed with the second part of the proof.

{\bf Claim 2. There exists a periodic solution crossing the equator.}

Without lack of generality, reversing time if necessary, we may assume that $u_1$ intersects $Q_4$ in the first turn. 

Assume first that $u_1$ does not intersect the equator before $Q_4$ and denote by $c_1$ the intersection point of $u_1$ and $Q_4$.

We have to distinguish two cases according to whether  $s_4$ intersects $Q_1$ or not. 

If $s_4$ intersects $Q_1$ in a point $c_4$, then any solution starting in the arc limited by $p_1,c_4$ must intersect the arc limited by $p_4,c_1$ (eventually passing through the northern hemisphere), except for the point corresponding to the intersection of $s_3$ with $Q_1$, but it can be extended in a continuous way to that point. See Figure~\ref{fig:5}.
\begin{figure}[h]
\includegraphics[scale=.30]{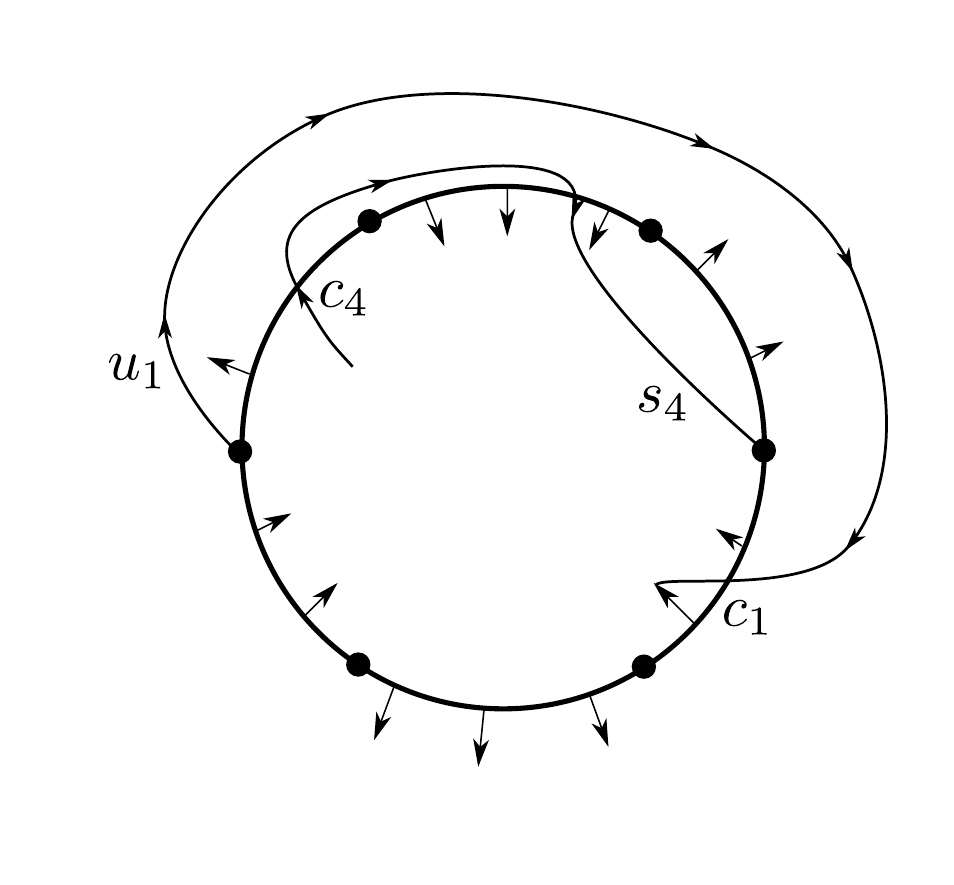}
\includegraphics[scale=.30]{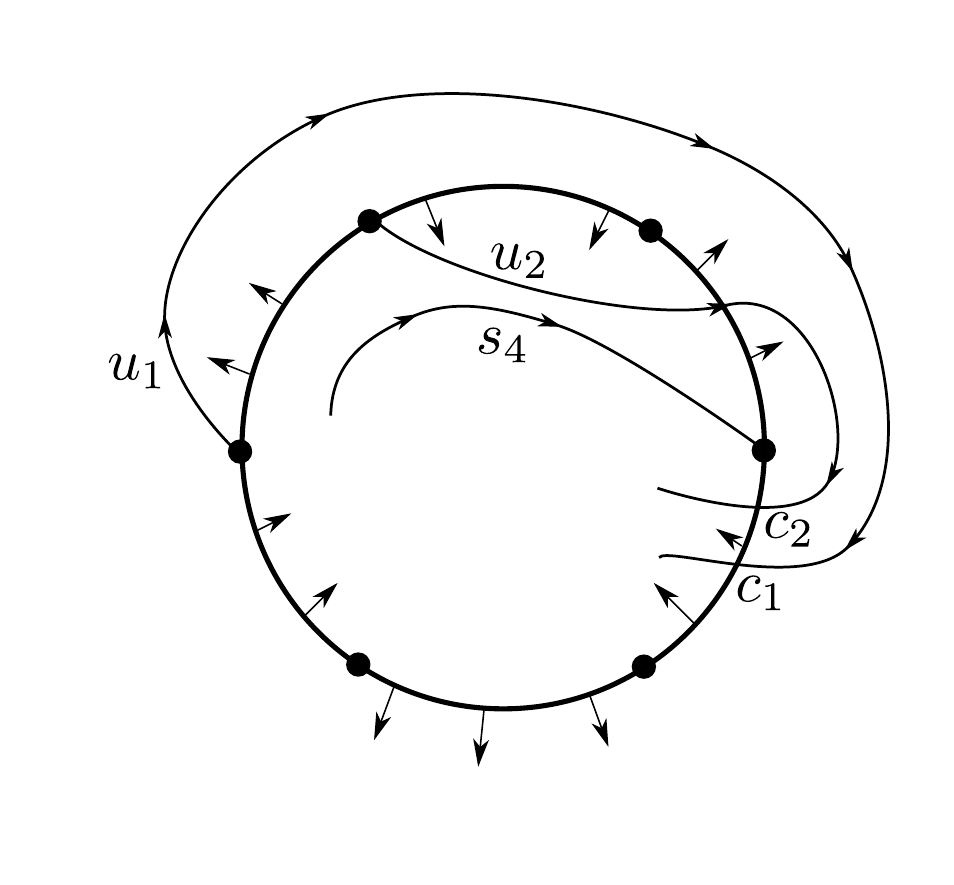}
\caption{Relative positions of $u_1$ and $s_3$.}\label{fig:5}
\end{figure}

Therefore, we may define a continuous map $\phi$ from the first arc into the second, such that 
\[
\lim_{p\to p_1}\phi(p)=c_1,\quad \lim_{p\to c_4} \phi(p) = p_4.
\]
As the angle of the sector limited by $p,\phi(p)$ is greater than $\pi$ for $p$ close to $c_1$ and lower than $\pi$ for $p$ close to $c_4$, there there exists a point $p$ such that the angle is $\pi$, and we conclude by Proposition~\ref{prop:sym}, except if $p$ correspond with a point in $s_3$. But in that case, again by Proposition~\ref{prop:sym}, there are two heteroclinic connections between $p_3$ and $p_6$.

Now, assume that $s_4$ does not intersect $Q_1$ in a point. Then $u_2$ must intersect $Q_4$ in a point $c_2$. Therefore, a solution starting at a point of $Q_1$ intersects $Q_4$ (in the arc defined by $c_1,c_2$). Define a map $\phi\colon Q_1\to Q_4$ as $\phi(p)$ the first intersection of the solution starting at $p$ with $Q_4$. Composing this map with the symmetry with respect to the origin and applying Brower's fixed point theorem, we obtain $p\in Q_1$ such that $\phi(p)=-p$. By Proposition~\ref{prop:sym}, we conclude. 

Finally, if $u_1$ intersects $Q$ at one point previous to $Q_4$, it must be at $Q_2$, but in order to intersect $Q_4$ afterwards, it must intersect $Q_3$ as well. Note that in this situation, there is no obstruction to repeat the previous argument in exactly the same way, and complete the proof. 
\end{proof}

\begin{coro}\label{corolario_cl} Assuming Hypothesis~\eqref{hypo},
there are systems of the form \eqref{eq:campoesfera} with at least three limit cycles in the sphere.
\end{coro}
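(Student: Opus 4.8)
The strategy is to locate three geometrically distinct periodic orbits inside one concrete member of the family: a small limit cycle near the north pole, its antipodal copy near the south pole, and a third periodic orbit crossing the equator provided by Theorem~\ref{teo-po-esfera}.

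First I would start from system~\eqref{eq:cl}, in which the parameter $a_2$ is still free and the Lyapunov constants are $l_2=\varepsilon$, $l_3=1$. As established when that system was introduced, for $\varepsilon<0$ small enough a hyperbolic limit cycle $\gamma_N$ is born from the origin by a degenerate Hopf bifurcation. Pulled back through the Poincar\'e compactification, $\gamma_N$ is a limit cycle of~\eqref{eq:campoesfera} contained in a small neighbourhood of the north pole, and by Proposition~\ref{prop:sym} its image $\gamma_S=-\gamma_N$ is a second limit cycle, sitting near the south pole. Neither meets the equator, and they lie in different hemispheres, so $\gamma_N\neq\gamma_S$.

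Next I would fix the remaining freedom in~\eqref{eq:cl} so that Theorem~\ref{teo-po-esfera} applies. With $a_1=\frac{1+120a_2\pi-82\varepsilon}{74\pi}$ and $a_3=\frac{-3+10a_2\pi+98\varepsilon}{74\pi}$, a short computation shows that at $\varepsilon=0$ the discriminant $D=a_2^2-4a_1a_3$ has the sign of $676(a_2\pi)^2+1400\,a_2\pi+12$, which is negative on a nonempty interval of negative values of $a_2\pi$; for instance $a_2=-1/\pi$ gives $D<0$ and $a_1,a_3\neq 0$ at $\varepsilon=0$, hence also for $\varepsilon<0$ small by continuity. By the analysis of simple infinite critical points carried out above, the equator of~\eqref{eq:campoesfera} then carries exactly two cusps and no other critical point, so Theorem~\ref{teo-po-esfera} yields a periodic solution $\Gamma$ of~\eqref{eq:campoesfera}, symmetric with respect to the origin, meeting the equator at two regular points; in particular $\Gamma\notin\{\gamma_N,\gamma_S\}$, since the latter two do not cross the equator.

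Finally I would upgrade $\Gamma$ to a limit cycle. Since~\eqref{eq:campoesfera} possesses the limit cycle $\gamma_N$, it is not a global center; hence, by Hypothesis~\ref{hypo}, $\Gamma$ cannot lie in an annulus of periodic orbits and is therefore isolated, i.e.\ a limit cycle. Choosing $\varepsilon<0$ small enough that the Hopf bifurcation, the inequalities $D<0$, $a_1\neq 0$, $a_3\neq 0$, and the ``two cusps only'' configuration hold simultaneously, system~\eqref{eq:campoesfera} has the three distinct limit cycles $\gamma_N$, $\gamma_S$, $\Gamma$. The only point needing genuine care is verifying the infinite critical point configuration for an explicit admissible choice of parameters and its compatibility with $\varepsilon<0$ small; the rest is a direct assembly of Proposition~\ref{prop:sym}, Theorem~\ref{teo-po-esfera} and Hypothesis~\ref{hypo}, and the statement is, as indicated, conditional on that hypothesis.
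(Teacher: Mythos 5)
Your proposal is correct and follows essentially the same route as the paper: the Hopf limit cycle from system~\eqref{eq:cl} and its antipodal copy via Proposition~\ref{prop:sym}, plus the equator-crossing periodic orbit from Theorem~\ref{teo-po-esfera} obtained by arranging $D=a_2^2-4a_1a_3<0$, upgraded to a limit cycle via Hypothesis~\ref{hypo}. The only (immaterial) difference is that you verify $D<0$ by exhibiting the explicit value $a_2=-1/\pi$ at $\varepsilon=0$ and invoking continuity, whereas the paper shows the discriminant of $D$ as a quadratic in $a_2$ is positive for $\varepsilon<0$.
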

\begin{proof}
    If we compactify system \eqref{eq:cl} through the Poincar\'e compactification, we get that there is at least one limit cycle in each one of both hemispheres. Now we will prove that for this system $D=a_2^2-4a_1a_3<0$ and, applying Theorem \ref{teo-po-esfera} we will conclude that another periodic orbit exists. Using Hypothesis \ref{hypo}, this periodic orbit will be a limit cycle and hence the system will have, at least, three limit cycles.

    If we compute $D$ for system \eqref{eq:cl} we get
    \[
    D=a_2^2-\frac{(-3+10a_2\pi+98\varepsilon)(1+120a_2\pi-82\varepsilon)}{74^2\pi^2}.
    \]
    We have to prove that, for $\varepsilon<0$ small enough, there exist values of $a_2$ such that $D<0.$ To do that, we will compute the discriminant of $D$ and show that, for $\varepsilon<0$ small enough, the discriminant is positive. The discriminant of $D$ is
    \[\Delta=\frac{8}{74^2\pi^2}(11+10432\varepsilon^2-678\varepsilon).
    \]
    Hence, for $\varepsilon<0$ the previous  discriminant is positive, and therefore, there  exist values of $a_2$ such that $D<0$ and we can apply Theorem \ref{teo-po-esfera}.
\end{proof}

\subsection{Invariant lines}

Invariant lines of the planar vector field \eqref{eq:main} correspond in the compactified vector field~\eqref{eq:campoesfera} to either heteroclinic connections, if they intersect the equator in critical points, or to maximal circles, if not. 
In this subsection we will prove that the last case can not happen, and therefore there are systems inside family \eqref{eq:campoesfera} having heteroclinic connections. 

If $q(x,y)=c_1x+c_2y+c_3$ is an invariant line, then $c_3\neq0$, since it can not pass through the critical point, and it must satisfy the relationship
\[q_x(x,y)P(x,y)+q_y(x,y)Q(x,y)=q(x,y)K(x,y),\]
where $x'=P(x,y), y'=Q(x,y)$ and for some cofactor $K(x,y).$ 

Therefore, a simple way to obtain the conditions to have an invariant line is by computing the remainder of  $q_xP+q_yQ$ divided by $q$, considered both as polynomials in $x$. Now, equaling the remainder to zero, we get that $c_1 = -b_2 c_3$ (induced by the assumption $a_4=0$), and
\[
a_1 = \frac{b_2^2 (c_2 -b_1 c_3 )}{c_3}, \quad
a_2 = -\frac{2 c_2  b_2 (c_2 -b_1 c_3 )}{c_3^2}, \quad
a_3= \frac{c_2^2 (c_2 -b_1 c_3 )}{c_3^3}.
\] 
Now, eliminating $c_2,c_3$, we obtain the conditions for \eqref{eq:main} to have an invariant line
\begin{equation}\label{condinv}
a_2 b_2^3 + 2a_1\left(a_1+ b_1
   b_2^2\right)=0,\quad 
a_3 b_2^6- a_1 \left(a_1+b_1
   b_2^2\right)^2=0.
\end{equation}
Moreover, there is exactly one invariant line when the previous equations hold and its expression is given by
\begin{equation}\label{recinv}
-b_2^3 x + (a_1 + b_1 b_2^2) y + b_2^2 =0.
\end{equation}

We remind that in the chart $\mathcal{U}_2$, the infinite critical points are $(\hat{u},0)$, with $\hat{u}$ being a root of the cubic $g(u)=-u(a_1 u^2+a_2 u+a_3)$. 

It is immediate to check that the invariant line \eqref{recinv} in coordinates $(u,v)$ always crosses some of the infinite critical points when the conditions \eqref{condinv} are satisfied. Furthermore, under \eqref{condinv} we have $D=a_2^2-4a_1a_3=0$, so in this case the infinite critical points through which the invariant line crosses the equator are degenerate. 

As a conclusion, the invariant lines of the planar vector field correspond always to heteroclinic connections. 

 If this was not the case, {\it i.e.} if in some cases the invariant straight lines did not pass through a critical point, then they would constitute a periodic orbit. Hence, the results in \cite{Grau} could be applied in order to know whether the existing algebraic periodic orbit (proved in Theorems \ref{teo-po-esfera} and \ref{teo-po-esfera2}) is a limit cycle. But, as proved in the current section, this is never the case.

\section{Conclusions and open questions}\label{Sec4}

In this paper we have studied the rigid quartic family of planar vector fields \eqref{eq:rigido} with $F(x,y)$ denoting the function defined in expression \eqref{funcionF}. Within this family, we have stated and proved the conditions that determine the existence of a center, and additionally, we have identified a subfamily characterized by the absence of limit cycles in the plane.

Under our point of view, the main contribution of the article lies in the study of the rigid system by means of its associated system \eqref{eq:campoesfera}, which is defined in the Poincar\'e sphere. By using local charts, we have characterized the infinite critical points of the  rigid system, classifying them as either cusps or as the  union of two hyperbolic and two parabolic sectors.

Despite all the previous proven results, there are still some open problems that should be solved in order to have a complete understanding of the studied family.

The main question that remains open deals with the number of limit cycles of system \eqref{eq:campoesfera}. 

On the one hand, we would need to determine the number of limit cycles that do not cross the equator. Those limit cycles would then be confined into one of the hemispheres and should also be  limit cycles of the finite system \eqref{eq:main}.
In this paper we have found examples inside the family \eqref{eq:main} with at least one limit cycle. Furthermore, we have conducted several numerical experiments indicating that the maximum number of limit cycles in the finite family \eqref{eq:main} is one. Thus, our first open question would be the following one:

\begin{conj} Is one the maximum number of limit cycles of system \eqref{eq:main}?
\end{conj}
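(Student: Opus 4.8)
The plan is to work entirely through the associated Abel equation \eqref{ec_Abel}, $r'=B(\theta)r^2+A(\theta)r^4$, recalling that the limit cycles of \eqref{eq:main} are in one-to-one correspondence with the strictly positive $2\pi$-periodic solutions of this equation, while the trivial solution $r\equiv 0$ corresponds to the origin. So the conjecture is equivalent to proving that, for every choice of parameters, the return map $\Pi(\rho)=r(2\pi;\rho)$ has at most one positive fixed point; equivalently that $\Delta(\rho):=\Pi(\rho)-\rho$ has at most one positive zero. A structural observation I would exploit from the start is that, in the normalization $a_4=0$ of \eqref{eq:main}, the leading coefficient factors as $A(\theta)=\cos\theta\,Q(\theta)$ with $Q(\theta)=a_1\cos^2\theta+a_2\cos\theta\sin\theta+a_3\sin^2\theta$, and the discriminant of $Q$ is exactly $D=a_2^2-4a_1a_3$, the same quantity governing the infinite critical points in Section~\ref{Sec3}. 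Thus $A$ changes sign at $\theta=\pm\pi/2$ in every case, and picks up extra sign changes precisely when $D>0$; this already signals where the difficulty will lie.

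The local part of the statement is essentially settled by the material preceding the conjecture. The two Lyapunov quantities $l_2,l_3$ computed in the proof of Theorem~\ref{theo:center} show the origin is a fine focus of order at most two, and since the divergence of \eqref{eq:main} vanishes identically no second small-amplitude cycle can bifurcate from the origin; hence at most one limit cycle is born near the origin. The whole content of the conjecture is therefore the \emph{global} claim that no further limit cycle exists at finite amplitude. I would attack this by the standard monotonicity/convexity method for the return map. Since $\Pi'(\rho)=\exp\int_0^{2\pi}\bigl(2B r+4A r^3\bigr)\,d\theta>0$, $\Pi$ is increasing, and it suffices to control the sign changes of $\Delta'$. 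The natural tools bound the number of periodic solutions through the sign of $f_{rr}=2B+12A r^2$ or of $f_{rrr}=24\,A(\theta)\,r$; for $r>0$ the latter has the sign of $A(\theta)$, so when $A$ has constant sign these criteria give uniqueness directly. Because $A$ never has constant sign in the $a_4=0$ normalization, I would instead pass to the rotated normalization $b_1=0$ used in the no-limit-cycle theorem: there $B(\theta)=b_2\sin\theta$, and for $a_1a_3\ge 0$ Theorem~2.4 of \cite{BT} already yields the absence of limit cycles. Since the number of limit cycles is rotation invariant, the strategy is to choose, within each rotation orbit, the representative best adapted to an available sign criterion.

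The hard part is the complementary region, where for \emph{every} rotated representative the leading coefficient of \eqref{ec_Abel} retains enough sign changes that the criteria of \cite{BT} and related refinements no longer apply. Here I would pursue two complementary routes. The first is to prove an extended uniqueness theorem for $r'=B r^2+A r^4$ allowing a fixed, controlled number of sign changes of $A$: concretely, to seek constants $\alpha,\beta$ (or a positive multiplier) so that $\alpha A+\beta B$ has a prescribed zero pattern forcing $\Delta$ to have at most one positive zero, using the factorization $A=\cos\theta\,Q(\theta)$ and the sign of $D$ to organize the cases. The second route exploits that, after removing the rotation and the scaling, only three essential parameters among $b_2,a_1,a_2,a_3$ survive: one computes a sufficient family of generalized Lyapunov quantities together with the relevant resultants and discriminants, and either shows by cylindrical algebraic decomposition that their common sign pattern forces uniqueness, or supplements this on the finitely many remaining parameter cells with a rigorous interval-arithmetic evaluation of $\Pi$ excluding two positive fixed points.

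The main obstacle, stated honestly, is exactly that $A(\theta)$ changes sign in the outstanding regime: every elementary uniqueness result for Abel equations requires the top coefficient to be of definite sign, and removing this hypothesis for the specific trigonometric coefficients $A=\cos\theta\,Q(\theta)$ and $B=b_1\cos\theta+b_2\sin\theta$ arising here is precisely the open difficulty, compounded by the absence of any rotatory parameter that would otherwise let one monitor the birth and death of a cycle. A complete proof would thus hinge either on a genuinely new uniqueness theorem for $r'=B r^2+A r^4$ with sign-changing $A$, or on a conclusive semi-algebraic/computer-assisted exhaustion of the reduced parameter space; the numerical experiments reported above make the affirmative answer the expected one.
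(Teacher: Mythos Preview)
The statement you are addressing is not a theorem in the paper but an \emph{Open Question}: the paper offers no proof, only the remark that numerical experiments suggest the answer is one. There is therefore nothing to compare your argument against; the paper's ``proof'' does not exist.

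Your proposal is candid about this and is really a research outline rather than a proof. You correctly identify the local part as settled (order of weakness two, zero divergence), you reduce the global question to uniqueness of positive fixed points of the return map of \eqref{ec_Abel}, and you isolate the genuine obstruction: in every rotated representative the coefficient $A(\theta)$ changes sign, so the standard definite-sign criteria for Abel equations fail. Both routes you sketch---a new uniqueness result for $r'=Br^2+Ar^4$ with sign-changing $A$ tailored to the factorization $A=\cos\theta\,Q(\theta)$, and a semi-algebraic/computer-assisted exhaustion of the reduced parameter space---are reasonable programs, but neither is carried out, and you say so yourself. In short: the paper leaves this open, and your proposal, while a sensible plan of attack, does not close it either.
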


On the other hand, we have to deal with the limit cycles that intersect the equator. In Theorems \ref{teo-po-esfera} and \ref{teo-po-esfera2} we have proved that when all the critical points in the equator are cusps, system \eqref{eq:campoesfera} always has either a periodic orbit intersecting the equator or a heteroclinic connection. Some questions arise from this result. 

The first one deals with Hypothesis \ref{hypo}. As demonstrated in Theorem \ref{theo:centersphere}, we have established that all finite centers (around the origin), when they exist, are global centers. However, for system \eqref{eq:campoesfera}, the reciprocity question remains unsolved. Specifically, we have adopted it as Hypothesis \ref{hypo}, constituting the next open question we pose:
\begin{conj} If system \eqref{eq:campoesfera} has an annulus or periodic orbits, is it a global center?
\end{conj}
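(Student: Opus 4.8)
\emph{A possible line of attack.} We read the question affirmatively: \emph{if \eqref{eq:campoesfera} has an annulus $\mathcal A$ of periodic orbits, then it has a global center} (equivalently, Hypothesis~\ref{hypo} holds). The plan is to combine the analyticity of a Poincar\'e return map with the classification of the critical points at infinity obtained in Section~\ref{Sec3}. On $\mathbb{S}^2$ minus the two poles $(0,0,\pm1)$, introduce the cylinder coordinates $\theta=\arg(z_1+iz_2)$, $\rho=z_3/\sqrt{z_1^2+z_2^2}$. Writing the planar system in polar coordinates, setting $\rho=1/r$ and rescaling time by $\rho^2$ shows that the orbits of \eqref{eq:campoesfera} are, as unparametrized curves on $\mathbb{S}^2$ minus the poles, exactly those of
\begin{equation*}
X:\qquad \theta'=\rho^2,\qquad \rho'=-A(\theta)-B(\theta)\rho^2,
\end{equation*}
with $A,B$ as in \eqref{ec_Abel}; here $\{\rho=0\}$ is the equator, $\rho\to\pm\infty$ are the two poles, and the antipodal symmetry of Proposition~\ref{prop:sym} becomes $(\theta,\rho)\mapsto(\theta+\pi,-\rho)$. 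Since $\theta'=\rho^2\ge0$, the angle is non-decreasing along every orbit, so a periodic orbit, being a simple closed curve on the cylinder, winds around it exactly once.

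Fix a meridian $\Sigma=\{\theta=\theta_0\}$ with $\theta_0$ not the argument of an equator critical point; away from $\rho=0$ this is a transversal, and, as $\theta$ is monotone, the first return of $X$ to $\Sigma$ occurs when $\theta$ has increased by $2\pi$. The return map $\mathcal P$ is a real-analytic diffeomorphism on the open set of $\rho\in\Sigma$ whose orbit completes such a turn without meeting a singular point, and $\mathcal P=\mathrm{id}$ on the interval of $\Sigma$ swept by $\mathcal A$. By the identity principle, $\mathcal P=\mathrm{id}$ on the whole connected component $D=(\rho_-,\rho_+)$ of its domain through that interval, so all the orbits through $D$ are periodic and $D$ is a maximal interval with this property. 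A finite endpoint $\rho_\pm$ cannot be an isolated periodic orbit — continuity of $\mathcal P$ would then place $\rho_\pm$ in the interior of its domain; and since the poles are monodromic, hence only approached as $\theta\to\pm\infty$, such an endpoint must carry a separatrix of a polycycle whose vertices are equator critical points. Therefore \emph{it suffices to prove that no period annulus of \eqref{eq:campoesfera} has a polycyclic boundary component}: granting this, $D=(-\infty,\infty)$, the periodic orbits accumulate on both poles, each pole is a center, the origin of \eqref{eq:main} is a center, and Theorems~\ref{theo:center} and~\ref{theo:centersphere} give the global center.

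The crux — and the reason we keep the statement as a conjecture — is this exclusion of polycyclic boundaries. The proposed route is as follows. By Corollary~\ref{resumeninf}, the vertices of a candidate boundary polycycle $\Gamma$ are cusps or points with two hyperbolic and two parabolic sectors; using the nilpotent normal forms already set up in Section~\ref{Sec3} one computes the leading term of the transition (Dulac) map at each vertex and imposes that the composition of these maps around $\Gamma$ be tangent to the identity, as it must be when a period annulus accumulates on $\Gamma$. This yields algebraic relations among $b_1,b_2,a_1,a_2,a_3$; one then shows that these relations are either incompatible — so $\Gamma$ cannot exist — or force condition (1) or (2) of Theorem~\ref{theo:center}, in which case Theorem~\ref{theo:centersphere} already makes the whole sphere a period annulus and no polycyclic boundary can occur. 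A sturdier variant would exploit that $\mathcal P$ is \emph{exactly} the identity on $D$: by analyticity this produces an analytic first integral on the region swept by $D$, which one would try to continue to a rational first integral on all of $\mathbb{S}^2$ and then match against \eqref{eq:campoesfera}, once more landing on Theorem~\ref{theo:center}.

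The step I expect to be the genuine obstacle is precisely this last one. Cusps and points with two hyperbolic and two parabolic sectors do possess hyperbolic sectors, so nothing \emph{a priori} prevents a homoclinic loop or a heteroclinic cycle through equator critical points from bounding a period annulus; excluding this requires sharp control of the passage maps near these very degenerate singularities, and the Dulac exponents that arise — governed by the Andreev invariants $m,n$ of each vertex, exactly the quantities computed in Section~\ref{Sec3} — do not simplify transparently. The cases already settled in Theorems~\ref{teo-po-esfera} and~\ref{teo-po-esfera2} — only cusps on the equator, where the dichotomy ``periodic orbit versus connection'' is decided — are the evidence that this is the right picture; the conjecture asserts that the presence of a whole annulus of periodic orbits forces the ``periodic orbit'' alternative throughout.
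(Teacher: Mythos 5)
This statement is not a theorem of the paper but its Open Question~2: the authors explicitly say they ``have not been able to prove or disprove it'' and instead adopt the affirmative answer as Hypothesis~\ref{hypo}, on which Corollary~\ref{corolario_cl} is conditioned. So there is no proof in the paper to compare against, and your proposal --- as you yourself acknowledge --- does not close the question either. That said, the reduction you set up is sound and worth recording. The passage to the cylinder coordinates, the monotonicity $\theta'=\rho^2\ge 0$ forcing every periodic orbit to wind once, and the identity-principle argument for the analytic return map are all correct, and they yield an \emph{unconditional} partial result you should state explicitly: if the annulus $\mathcal A$ is contained in one open hemisphere, the conjecture holds. Indeed the orbits of $\mathcal A$ then surround a pole, the region between $\mathcal A$ and that pole contains no other singular point, and the return map of the Abel equation \eqref{ec_Abel} is analytic on the whole interval $[0,r^*)$ of initial conditions whose solutions are defined on $[0,2\pi]$ (including $r=0$); being the identity on a subinterval, it is the identity near $r=0$, the origin of \eqref{eq:main} is a center, and Theorem~\ref{theo:centersphere} gives the global center. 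The only genuinely open case is an annulus crossing the equator.

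For that case your argument has a real gap, which you correctly locate but do not fill: excluding a polycycle through the equator singularities as a boundary component of the period annulus. Corollary~\ref{resumeninf} guarantees that every infinite critical point has hyperbolic sectors, so nothing forbids a graphic with these vertices from bounding (indeed, Section~3.5 shows heteroclinic connections between degenerate equator points do occur, exactly when $D=0$); deciding whether a period annulus can accumulate on such a graphic requires the Dulac transition maps near nilpotent cusps and near the two-hyperbolic/two-parabolic points, which are computed nowhere in the paper and whose asymptotics at these degenerate (non-elementary, non-hyperbolic) singularities are the hard analytic content. The ``sturdier variant'' is weaker still: an analytic first integral on the region swept by $D$ has no reason to extend to a rational first integral on $\mathbb S^2$, so that route is not justified. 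There is also a small technical flaw to repair even in the reduction: at the point $\rho=0$ of your meridian $\Sigma$ the vector field is $(0,-A(\theta_0))$, i.e.\ \emph{tangent} to $\Sigma$, so $\Sigma$ is not a section there and the return map is not obviously analytic (or even well defined) across $\rho=0$; for an annulus crossing the equator you should either work with two half-meridians glued along the flow, or take as section a transversal arc of the equator itself between consecutive singular points. None of this invalidates the picture, but the statement must remain an open question.
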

Note that if the answer to this question is affirmative, then this periodic orbit that intersects the equator, when it exists, will be a limit cycle of \eqref{eq:campoesfera}, except for the cases of global centers stated in Theorem \ref{theo:center}. 

Also related to the limit cycles in the sphere, in Corollary \ref{corolario_cl} we have proved that, if the previous open question has an affirmative answer, there are systems inside family \eqref{eq:campoesfera} having at least three limit cycles. Hence, we have a lower bound for the number of limit cycles for this family. The next question, much more difficult, has to deal with the upper bound:
\begin{conj}
    Which is the maximum number of limit cycles that system \eqref{eq:campoesfera} can have?
\end{conj}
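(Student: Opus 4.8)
The plan is to partition the limit cycles of \eqref{eq:campoesfera} into two disjoint classes according to whether they meet the equator $Q$, and to bound each class separately. A limit cycle disjoint from $Q$ lies in one open hemisphere; since the poles are the only critical points off $Q$, such a cycle is the projection of a limit cycle of the planar field \eqref{eq:main} (in the northern hemisphere) or of its antipodal copy (in the southern, by Proposition \ref{prop:sym}). Hence the number of hemisphere-confined cycles is exactly twice the number of limit cycles of \eqref{eq:main}, which in turn equals the number of isolated positive periodic solutions of the Abel equation \eqref{ec_Abel}, $r'=B(\theta)r^2+A(\theta)r^4$, with $A$ a cubic and $B$ a degree-one trigonometric polynomial.

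For the hemisphere-confined cycles I would lean on the Abel machinery. The criterion of \cite{BT}, already invoked above, gives the absence of limit cycles on the subfamily $a_1a_3\ge0$; more generally, controlling the sign changes of $A$ and $B$ should bound the positive periodic solutions of \eqref{ec_Abel}. The strategy is to estimate the number of zeros of the derivative of its displacement map, using that the low-order focal data are governed by the Lyapunov constants $l_2,l_3$ of Theorem \ref{theo:center}, whose simultaneous vanishing already confines the parameters to the center variety. Numerical continuation suggests the sharp bound here is one, consistent with the vanishing of the divergence of \eqref{eq:main}, which forbids a second limit cycle from arising by a Hopf bifurcation.

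For the equator-crossing cycles I would exploit the rigid geometry of the flow together with the central symmetry. By Proposition \ref{prop:sym} every such orbit is symmetric and meets $Q$ in a pair of antipodal regular points, so it is a fixed point of the composition of the half-return map $Q^+\to Q^-$ (built as in the proof of Theorem \ref{teo-po-esfera}) with the antipodal map. Since solutions rotate monotonically clockwise in each hemisphere and cross $Q$ orthogonally, this half-return map is a monotone circle map, and the cusp/sector classification of Corollary \ref{resumeninf} rules out nodal sectors that could break monotonicity. The plan is to upgrade this to a monotonicity (or convexity) statement for the associated displacement function, so that its graph meets the diagonal in a controlled number of points and the count of symmetric crossing orbits is bounded in terms of the admissible cusp configurations studied in Section \ref{Sec3}.

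The hard part is the hemisphere-confined count: a uniform upper bound for the limit cycles of \eqref{ec_Abel} over the whole parameter family is precisely the planar side of the problem and is intimately tied to the still-open conjecture of \cite{Gasull}. The equator-crossing count, by contrast, should be reachable once the monotonicity or convexity of the half-return map is established; combining a sharp planar bound, its symmetric doubling, and the crossing bound would then produce the global maximum for \eqref{eq:campoesfera}.
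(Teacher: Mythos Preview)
The statement you are addressing is not a theorem in the paper but an \emph{Open Question}; the authors explicitly pose it in Section~\ref{Sec4} without a proof or even a conjectured value. There is therefore no proof in the paper to compare against: your text is a research strategy, and you yourself acknowledge that the planar (hemisphere-confined) count reduces to a problem ``intimately tied to the still-open conjecture of \cite{Gasull}''. So as a proof the proposal has a genuine gap by design: the central step---a uniform upper bound for the positive periodic solutions of \eqref{ec_Abel}---is left as an open problem, exactly as in the paper.

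Beyond that structural point, two specific steps in your outline would not go through as stated. First, you assert that every equator-crossing limit cycle ``is symmetric and meets $Q$ in a pair of antipodal regular points''. Proposition~\ref{prop:sym} only says the antipodal image of a solution is again a solution; it does not force a given limit cycle to coincide with its antipode, so a priori there could be a non-symmetric pair of antipodal crossing cycles. Moreover, even for the symmetric periodic orbit produced in Theorem~\ref{teo-po-esfera2} the intersection with $Q$ may consist of six points, not two, so the reduction to a single half-return map $Q^+\to Q^-$ with a two-point crossing pattern is not available in the six-cusp configuration. Second, the claimed monotonicity of that half-return map is not justified: trajectories can leave and re-enter a hemisphere several times before completing a turn (this is precisely the case analysis in the proof of Theorem~\ref{teo-po-esfera2}), so the first-intersection map with $Q$ need not be an orientation-preserving circle homeomorphism, and the cusp classification of Corollary~\ref{resumeninf} does not by itself supply the needed monotonicity or convexity.
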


Another natural question would be to check if the same result proved in Theorems \ref{teo-po-esfera} and \ref{teo-po-esfera2} is also true even when the infinite critical points are not all cusps. This turns out to be our next open question:
\begin{conj}
    Does system \eqref{eq:campoesfera} always have  either a periodic orbit that intersects the equator or a heteroclinic connection?
\end{conj}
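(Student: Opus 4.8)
The plan is to follow the same topological strategy as for the two-cusp case in Theorem~\ref{teo-po-esfera}, now with a heavier combinatorial bookkeeping of the six cusp separatrices. First I would fix notation: label the six cusps $p_1,\dots,p_6$ in clockwise order along the equator $Q$ with $p_1=(-1,0,0)$, let $Q_1,\dots,Q_6$ be the open arcs of $Q$ ($Q_i$ lying between $p_i$ and $p_{i+1}$, all indices in $\mathbb{Z}/6\mathbb{Z}$), and for each $p_i$ let $u_i$ and $s_i$ be, respectively, its unstable and its stable separatrix; since the transversal component of the field along $Q$ changes sign precisely at the $p_i$, the separatrices of two consecutive cusps leave into opposite hemispheres. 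I would then record the facts used throughout: in each hemisphere all orbits turn clockwise and meet $Q$ orthogonally at regular points, the only critical points off $Q$ are the two monodromic poles, the antipodal map is a symmetry of~\eqref{eq:campoesfera} (Proposition~\ref{prop:sym}) sending $p_i\mapsto p_{i+3}$, $Q_i\mapsto Q_{i+3}$, $u_i\mapsto u_{i+3}$ and $s_i\mapsto s_{i+3}$, and, by hypothesis, no $u_i$ coincides with any $s_j$. The direction of the field implies that, on its first return, $u_i$ cannot land on $Q_i$ and $s_i$ cannot (traced backwards) land on $Q_{i-1}$; a separatrix that does not return to $Q$ merely stays in one hemisphere and acts as a barrier, playing the role of the curve $u(t)$ in the proof of Theorem~\ref{teo-po-esfera}, and Corollary~\ref{resumeninf} guarantees that a trapped orbit cannot be absorbed into a nodal sector at a cusp.

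The first step (Claim~1) is to show that some separatrix travels ``half way around'': there is an index $i$ with $u_i$ reaching, on its first turn, the arc antipodal to $Q_i$, or $s_i$ reaching (backwards) the arc antipodal to $Q_{i-1}$. I would prove this by a finite case analysis on where $u_1$ first meets $Q$ after leaving $p_1$: the field direction allows only $Q_2$ or $Q_4$, and $Q_4$ already yields the claim, so the remaining configurations are ``$u_1$ meets $Q_2$ but not $Q_3$'', ``$u_1$ meets $Q_2$ and then $Q_3$'', and ``$u_1$ does not meet $Q_2$ on its first turn''. In each of these I would combine the position of $u_1$, of its antipode $u_4$, and of the stable separatrix $s_3$ (which cannot reach $Q_2$ or $Q_6$) to cut out a region of the sphere bounded by pieces of $Q$ and of these separatrices, note that a further separatrix (typically $u_5$) is trapped in it and must exit through a prescribed arc, and deduce either the claim directly or a reduction, by reversing time or relabeling, to a configuration already settled (including the two-cusp picture of Theorem~\ref{teo-po-esfera}). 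This enumeration is the part I expect to be \emph{the main obstacle}: one must verify that no way of interleaving the six separatrices along $Q$ escapes the conclusion, and the hypothesis of no homoclinic or heteroclinic connection, which is exactly what prevents a separatrix from terminating at a cusp, is used throughout.

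The second step (Claim~2) converts Claim~1 into a periodic orbit. By Claim~1 and reversing time if necessary, I may assume $u_1$ reaches the antipodal arc $Q_4$ on its first turn, at a point $c_1$ (if it first passes through $Q_2$ and then $Q_3$ the argument is identical); by symmetry $u_4$ reaches $Q_1$. If the stable separatrix $s_4$, traced backwards, meets $Q_1$ at a point $c_4$, then every orbit issued from the subarc of $Q_1$ between $p_1$ and $c_4$ must leave again through the subarc of $Q_4$ between $c_1$ and $p_4$; this defines a continuous map $\phi$ between these subarcs with $\phi(p)\to c_1$ as $p\to p_1$ and $\phi(p)\to p_4$ as $p\to c_4$, and the angular width of the sector spanned by $p$ and $\phi(p)$ exceeds $\pi$ near one end and is less than $\pi$ near the other, so by the intermediate value theorem some $p$ satisfies $\phi(p)=-p$; by Proposition~\ref{prop:sym} the orbit through such a $p$ together with its antipode closes into a periodic solution meeting $Q$ only at the symmetric regular points $p$ and $-p$, unless $p$ lies on $s_3$, in which case symmetry would force two heteroclinic connections between $p_3$ and $p_6$, against the hypothesis. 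If instead $s_4$ misses $Q_1$, then $u_2$ is trapped and must reach $Q_4$, so the first-return map is defined on all of $Q_1$ with values in $Q_4$; composing it with the antipodal map and applying Brouwer's fixed point theorem yields $p\in Q_1$ returning to $-p$, and Proposition~\ref{prop:sym} again closes the orbit. In every case the periodic solution obtained is invariant under the antipodal map and crosses $Q$ only at regular points, which hence come in antipodal pairs; depending on whether the connecting half-orbit re-crosses $Q$ in between, these are two or six points, as claimed.
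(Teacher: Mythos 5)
The statement you are trying to prove is not a theorem of the paper at all: it is posed as an \emph{open question}, and the authors explicitly state that they do not know the answer. What they do prove (Theorems~\ref{teo-po-esfera} and~\ref{teo-po-esfera2}) is the special case in which every infinite critical point is a cusp, and the question asks precisely whether the conclusion survives when some of the equatorial critical points are \emph{not} cusps, i.e.\ when, by Corollary~\ref{resumeninf}, they have two hyperbolic and two parabolic sectors (the degenerate configurations with multiple roots of $g(u)=-u(a_1u^2+a_2u+a_3)$). Your proposal is, essentially step by step, a reproduction of the paper's proof of Theorem~\ref{teo-po-esfera2} for six cusps; it re-derives a result the paper already contains and never touches the configurations that make the question open.

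The concrete gap is this: your argument (and the paper's) hinges on the claim that every orbit leaving the equator must cross it again, which is established by showing that otherwise some equatorial critical point would have to absorb an open set of orbits, contradicting the fact that a cusp has no parabolic (nodal-type) sectors. When an equatorial critical point has two parabolic sectors, that contradiction evaporates: a whole band of orbits may legitimately terminate at such a point, the transition maps $Q^+\to Q^-$ or $Q_1\to Q_4$ need not be defined on a full arc, and the Brouwer/intermediate-value step has nothing to act on. Handling those cases would require a genuinely new idea --- for instance, a separate analysis of the separatrix structure bounding the parabolic sectors, or an argument that the orbits entering them are compensated by a heteroclinic connection --- and none of that appears in your proposal. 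As written, the proposal does not answer the open question.
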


Recall that we have proved that there exist invariant straight lines for system \eqref{eq:campoesfera} and that, in all the cases they exist, they form heteroclinic connections between the infinite critical points. The fact that in these cases the heteroclinic connections are algebraic suggests to us that the existence of this kind of connections is very unlikely. This leads us to the following open question:
\begin{conj}
    Do the systems inside family \eqref{eq:campoesfera} having heteroclinic connections constitute a zero measure set in the set of parameters?
\end{conj}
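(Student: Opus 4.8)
\medskip

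\noindent\textbf{Proof proposal.} The plan is to show that the set of $\lambda=(b_1,b_2,a_1,a_2,a_3)\in\mathbb{R}^5$ for which \eqref{eq:campoesfera} has a heteroclinic or homoclinic connection between infinite critical points is a countable union of zero sets of real-analytic functions, none identically zero, hence of Lebesgue measure zero. First one reduces to the \emph{cusp stratum}: by the results of Section~\ref{Sec3}, the set $\mathcal{O}$ of parameters for which all infinite critical points are simple — equivalently $(a_1^2+a_2^2)\,a_3\,(a_2^2-4a_1a_3)\neq0$ — is the complement of a proper algebraic hypersurface, so $\mathbb{R}^5\setminus\mathcal{O}$ has measure zero and may be discarded. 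On $\mathcal{O}$ every infinite critical point is a cusp (Corollary~\ref{resumeninf}): two of them if $D=a_2^2-4a_1a_3<0$, six if $D>0$; moreover $\mathcal{O}$ has finitely many connected components, each of the form $U\times\mathbb{R}^2$ with $U$ a connected component of the corresponding semialgebraic subset of $(a_1,a_2,a_3)$-space.

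Next one builds the splitting functions. Fix a component $\mathcal{C}=U\times\mathbb{R}^2$ of $\mathcal{O}$. For $\lambda\in\mathcal{C}$ the cusps are simple roots of $-u(a_1u^2+a_2u+a_3)$ and hence vary analytically with $\lambda$; carrying out the blow-ups of Section~\ref{Sec3} uniformly in the parameters, each cusp desingularizes into hyperbolic singularities, so each of its separatrices into a given hemisphere $H$ carries a distinguished base point depending analytically on $\lambda$, and the flow depends analytically on time and parameters. Fixing a meridian arc $\Sigma\subset H$ transversal to the field (the field is transversal to meridians away from the poles and the equator) and using that orbits rotate clockwise, one obtains — outside a further proper analytic subset of $\mathcal{C}$ — analytic maps sending $\lambda$ to the successive intersections with $\Sigma$ of the forward orbit of each unstable separatrix and of the backward orbit of each stable separatrix. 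A connection from $p_i$ to $p_j$ through $H$ occurs precisely when one intersection of the former coincides with one of the latter; encoding each such coincidence as the vanishing of the difference of the two scalar $\Sigma$-coordinates produces countably many real-analytic functions $\mathcal{S}$ on $\mathcal{C}$ whose common zero set contains every $\lambda\in\mathcal{C}$ admitting a connection.

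The non-degeneracy comes from the global-center slice. Each $\mathcal{C}=U\times\mathbb{R}^2$ contains the nonempty slice $U\times\{(0,0)\}$, i.e.\ the parameters with $b_1=b_2=0$; by Theorem~\ref{theo:center}(1) and Theorem~\ref{theo:centersphere} these all give global centers of \eqref{eq:campoesfera}, so every orbit off the poles is periodic and there are no connections between infinite critical points. Hence each $\mathcal{S}$ is nonzero on all of $U\times\{(0,0)\}$, and therefore, being real-analytic on the connected set $\mathcal{C}$, is not identically zero, so $\{\mathcal{S}=0\}$ has measure zero in $\mathcal{C}$. Taking the countable union over all such $\mathcal{S}$, the finite union over the components of $\mathcal{O}$, and adding $\mathbb{R}^5\setminus\mathcal{O}$ yields the claim.

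The heart of the matter — and the main obstacle — is the analytic bookkeeping of the second step: one must verify that the desingularizing blow-ups can be performed uniformly in $\lambda$ so that the cusp separatrices and the induced intersection maps with $\Sigma$ genuinely depend real-analytically on $\lambda$ throughout $\mathcal{C}$, and one must account for the separatrices whose $\omega$- or $\alpha$-limit is a pole, or which reach a cusp before meeting $\Sigma$ — such parameters must be absorbed into additional proper analytic subsets. Once this analytic framework is in place, the reduction and the non-degeneracy arguments are routine, the decisive input being that every connected component of the cusp stratum meets the global-center locus $b_1=b_2=0$.
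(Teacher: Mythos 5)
First, a remark on context: this statement is one of the paper's \emph{open questions} (Section~\ref{Sec4}); the authors give no proof, so your argument has to stand entirely on its own. It does not, because the decisive non-degeneracy step is false. You argue that on the slice $b_1=b_2=0$ the system is a global center, hence ``every orbit off the poles is periodic and there are no connections between infinite critical points,'' and you use this slice as the witness showing each splitting function $\mathcal{S}$ is not identically zero. But the infinite critical points depend only on $(a_1,a_2,a_3)$, so the equator still carries cusps when $b_1=b_2=0$, and each cusp has two separatrices, which are non-periodic orbits; in a global center these separatrices have nowhere to go except into one another, i.e.\ they are forced to form homoclinic or heteroclinic connections. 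Concretely, take $b_1=b_2=0$, $a_1=1$, $a_2=0$, $a_3=-1$ (so $(a_1^2+a_2^2)\,a_3\,D\neq0$ and $D=4>0$: six simple cusps, in the directions $\theta=\pm\pi/4,\pm\pi/2,\pm3\pi/4$). The Abel equation is $r'=r^4\cos\theta\cos2\theta$, whose orbits are the level sets $r^{-3}=C-3\sin\theta+2\sin^3\theta$. For $C=\sqrt{2}$ the right-hand side is nonnegative and vanishes to second order exactly at $\theta\equiv\pi/4$ and $\theta\equiv3\pi/4$; the corresponding orbit on the sphere therefore tends, in forward and backward time, to the two \emph{distinct} cusps in those directions: a genuine heteroclinic connection lying on your witness slice. (In the two-cusp case $D<0$, e.g.\ $a_1=a_3=1$, $a_2=0$, the same mechanism produces homoclinic loops at each cusp.) So the global-center locus is contained in the connection locus rather than disjoint from it, and the claim that $\mathcal{S}$ is nonzero on $U\times\{(0,0)\}$ collapses, taking the measure-zero conclusion with it.

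Beyond this, the step you yourself flag as the heart of the matter --- uniform-in-$\lambda$ desingularization of the cusps, real-analytic dependence of their separatrices on the parameters across each component of the cusp stratum, and the bookkeeping of separatrices that limit on a pole or hit another cusp before reaching the transversal $\Sigma$ --- is only sketched, and it is precisely where the difficulty of the open question lives. Even granting that framework, the proof would still require, for each connected component and each candidate connection, an explicit parameter value at which the corresponding splitting function is provably nonzero; the center slice cannot play that role, and no substitute is offered. The overall blueprint (stratify, build analytic splitting functions, show they are not identically zero) is reasonable, but as written the proposal does not prove the statement, and the question should be regarded as still open.
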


In summary, if open questions 2, 4 and 5 have an affirmative answer, then we would have proved that system \eqref{eq:campoesfera} always has at least one limit cycle that crosses the equator, except for a zero measure set in the set of parameters. 

Finally, we can present a last open question related to the nature of the periodic orbit crossing the equator. 
The invariant straight lines of system \eqref{eq:campoesfera} studied in Section 3.5 are always heteroclinic connections, never periodic orbits. This fact leads us to ask the following question. 

\begin{conj}
Can the periodic orbit that crosses the equator be algebraic?
\end{conj}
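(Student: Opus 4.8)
The plan is to recast the question in the language of the Darboux theory of invariant algebraic curves and then try to prove that no such orbit can exist, which is the behaviour suggested by the invariant-line analysis of Section~3.5. A periodic orbit $\Gamma$ of \eqref{eq:campoesfera} that crosses the equator and is algebraic is the same object as an irreducible homogeneous polynomial $\bar f(z_1,z_2,z_3)$, not divisible by $z_3$, such that $\{\bar f=0\}$ is invariant for \eqref{eq:campoesfera} and one of its ovals on $\mathbb{S}^2$ is $\Gamma$. Dehomogenizing, with $f(x,y)=\bar f(x,y,1)$, this is equivalent to \eqref{eq:main} admitting an irreducible invariant algebraic curve $f(x,y)=0$ of degree $d=\deg\bar f$ whose real trace has an unbounded branch meeting the line at infinity only in the direction $[\cos\theta_0:\sin\theta_0]$, transversally and from the finite side, at a point that is a \emph{regular} point of the equator; by the proposition describing the direction of the vector field on the equator this forces $a_1\cos^3\theta_0+a_2\cos^2\theta_0\sin\theta_0+a_3\cos\theta_0\sin^2\theta_0\neq 0$, so the leading form $f_d=\bar f(x,y,0)$ must have a real linear factor $\sin\theta_0\,x-\cos\theta_0\,y$ that does not divide $x(a_1x^2+a_2xy+a_3y^2)$. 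I would start from this reduction.

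The next step is to exploit the rigidity of the cofactor. Writing the planar field as $x'=P$, $y'=Q$, whose degree-four part is the radial vector $(x,y)\,F_3$ with $F_3=a_1x^3+a_2x^2y+a_3xy^2$, and expanding $f_xP+f_yQ=Kf$ homogeneous degree by homogeneous degree with $f=f_d+f_{d-1}+\cdots$ and $K=K_3+K_2+K_1+K_0$, Euler's identity gives at top degree $K_3=d\,F_3=d\,x(a_1x^2+a_2xy+a_3y^2)$, and the lower-degree blocks then determine $f_{d-1},f_{d-2},\dots$ recursively in terms of $f_d$ and of $b_1,b_2$, exactly in the spirit of the computation that produced the invariant-line conditions \eqref{condinv}--\eqref{recinv}. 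Equivalently, in polar coordinates one asks the Abel equation \eqref{ec_Abel}, $r'=B(\theta)r^2+A(\theta)r^4$, to possess a $2\pi$-periodic algebraic solution with a single blow-up at $\theta_0$, whose cofactor has leading trigonometric part $d\,A(\theta)=d\cos\theta(a_1\cos^2\theta+a_2\cos\theta\sin\theta+a_3\sin^2\theta)$, the factor $\cos\theta$ encoding $a_4=0$. The aim of the main step is to show that this recursive system, together with the regular-crossing condition on $f_d$ and the classification of the infinite critical points given by Corollary~\ref{resumeninf} (cusps, or two hyperbolic and two parabolic sectors; in particular never a nodal sector), admits no solution with the required real geometry --- so that the orbit of Theorems~\ref{teo-po-esfera} and \ref{teo-po-esfera2} is never algebraic; should it instead admit a Darboux-type solution, possibly after allowing an exponential factor, one would obtain an explicit counterexample.

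To actually close the argument I would combine: (i) the cofactor rigidity above; (ii) a careful treatment of how $\{\bar f=0\}$ may touch the equator --- here the classical fact that an invariant algebraic curve meets infinity only at singular points of the field at infinity is \emph{vacuous}, since the radial top-degree part makes the whole equator critical in the classical compactification and non-invariant in the modified one used here, so one must argue instead with the reduced field at infinity (whose singularities sit at the roots of $\cos\theta(a_1\cos^2\theta+a_2\cos\theta\sin\theta+a_3\sin^2\theta)$) and with the explicit common factor $z_1$; and (iii) the criteria for algebraic periodic solutions of Abel equations behind \cite{Grau} (already invoked in Section~3.5) together with the non-existence test of \cite{BT}, applied on the $\theta$-arcs where $A(\theta)$ keeps a constant sign. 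The hard part is exactly this last mile: there is no a priori bound on the degree $d$, and the recursion for $f_{d-1},\dots,f_0$ is underdetermined, so deciding that it has no admissible real solution (or exhibiting one) is delicate; this is presumably why the question is left open. A sensible intermediate target is to settle the low-degree cases $d\le 4$ --- thereby ruling out algebraic periodic orbits that are conics, cubics or quartics --- and then to conjecture, in analogy with the invariant-line result of Section~3.5, that the periodic orbit crossing the equator is never algebraic.
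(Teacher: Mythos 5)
This statement is not a theorem of the paper: it is posed as an open question (the last of the paper's Open Questions), and the authors prove nothing about it in either direction. So the only thing to assess is whether your proposal settles it, and it does not --- by your own admission. That said, your setup is sensible and consistent with what the paper does in Section~3.5: an algebraic periodic orbit crossing the equator would correspond to an irreducible invariant algebraic curve $f=0$ of \eqref{eq:main} with an unbounded real branch whose asymptotic direction is a zero of the leading form $f_d$ but not of $F_3(x,y)=a_1x^3+a_2x^2y+a_3xy^2$ (so that the crossing happens at a regular point of the equator, as in Theorem~\ref{teo-po-esfera}); the top block of $f_xP+f_yQ=Kf$ does force $K_3=d\,F_3$ by Euler's identity; and you are right that the classical ``invariant algebraic curves meet infinity only at singularities of the compactified field'' argument is vacuous here, since the equator is a circle of singularities in the classical compactification and is non-invariant (crossed orthogonally) in the modified one of \cite{BravoFerTer}. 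These are genuinely useful reductions.

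The gap is that the proof stops exactly where it would have to start. The cascade determining $f_{d-1},\dots,f_0$ from $f_d$ is, as you concede, underdetermined, and there is no a priori bound on $d$, so the assertion that the system ``admits no solution with the required real geometry'' is an aspiration, not an argument; worse, the proposal hedges between proving non-existence and exhibiting a counterexample, and a proof must do one or the other. Two of your three closing ingredients also do not bear the weight you place on them: Corollary~\ref{resumeninf} classifies the local sectors \emph{at the infinite critical points}, whereas your putative algebraic orbit crosses the equator at \emph{regular} points, so that corollary imposes no constraint on it; and the criterion of \cite{BT} (Theorem~2.4) rules out positive $2\pi$-periodic solutions of the Abel equation \eqref{ec_Abel}, not solutions that blow up at some $\theta_0$ --- which is precisely the kind of solution an equator-crossing orbit corresponds to --- so applying it ``on sign-arcs of $A$'' is not a licensed use of that theorem. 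Carrying out the low-degree cases $d\le 4$ explicitly, as you suggest at the end, would be a legitimate partial result, but as written nothing, not even that, has been established.
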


\section*{Acknowledgments}

The authors are partially supported by grant number PID2020-118726GB-I00 funded by MCIN/AEI/10.13039/501100011033.

\end{document}